\newtheorem{theorem}{Theorem}[section]
\newtheorem{corollary}[theorem]{Corollary}
\newtheorem{proposition}[theorem]{Proposition}
\newtheorem{lemma}[theorem]{Lemma}
\theoremstyle{definition}
\newtheorem{remark}[theorem]{Remark}
\newtheorem{definition}[theorem]{Definition}
\newtheorem{example}[theorem]{Example}
\def\ie{{\em i.e.,} }
\def\eg{{\em e.g.} }
\newfont\bbf{msbm10 at 12pt}
\def\eps{\varepsilon}
\def\phi{\varphi}
\def\R{{\mathbb R}}
\def\N{{\mathbb N}}
\def\diam{\mbox{\rm diam}\,}
\def\theta{\vartheta}
\def\chain{{\mathcal C}}
\def\eps{\varepsilon}
\begin{document}

\title{Planar embeddings of Minc's continuum and generalizations}

\author{Ana Anu\v{s}i\'c}
\address[A.\ Anu\v{s}i\'c]{Departamento de Matem\'atica Aplicada, IME-USP, Rua de Mat\~ao 1010, Cidade Universit\'aria, 05508-090 S\~ao Paulo SP, Brazil}
\email{anaanusic@ime.usp.br}
\thanks{We thank Henk Bruin and Jernej \v Cin\v c for helpful remarks on the earlier versions of this paper. AA was supported by grant 2018/17585-5, S\~ao Paulo Research Foundation (FAPESP)}
\date{\today}

\subjclass[2010]{54C25, 54F15, 54H20, 37B45, 37C70, 37E05}
\keywords{planar embeddings, accessible points, chainable continuum, attractor, inverse limit space}

\maketitle

\begin{abstract}
	We show that if $f\colon I\to I$ is piecewise monotone, post-critically finite, and locally eventually onto, then for every point $x\in X=\underleftarrow{\lim}(I,f)$ there exists a planar embedding of $X$ such that $x$ is accessible. In particular, every point $x$ in Minc's continuum $X_M$ from \cite[Question 19 p. 335]{problems} can be embedded accessibly. All constructed embeddings are {\em thin}, \ie can be covered by an arbitrary small chain of open sets which are connected in the plane.
\end{abstract}

\section{Introduction}

The main motivation for this study is the following long-standing open problem:

{\bf Problem}\hspace{-2pt} (Nadler and Quinn 1972 \cite[p. 229]{Nadler} and \cite{NaQu}){\bf .} Let $X$ be a chainable continuum, and $x\in X$. Is there a planar embedding of $X$ such that $x$ is accessible?

The importance of this problem is illustrated by the fact that it appears at three independent places in the collection of open problems in Continuum Theory published in 2018 \cite[see Question 1, Question 49, and Question 51]{problems2018}. We will give a positive answer to the Nadler-Quinn problem for every point in a wide class of chainable continua, which includes $\underleftarrow{\lim}(I,f)$ for a simplicial locally eventually onto map $f$, and in particular continuum $X_M$ introduced by Piotr Minc in \cite[Question 19 p. 335]{problems}. Continuum $X_M$ was suspected to have a point which is inaccessible in every planar embedding of $X_M$. 

A continuum is a non-empty, compact, connected, metric space, and it  is chainable if it can be represented as an inverse limit with bonding maps $f_i\colon I\to I$, $i\in\N$, which can be assumed to be onto and piecewise linear. That is,
$$X=\underleftarrow{\lim}(I,f_i)=\{(\xi_0,\xi_1,\xi_2,\ldots): f_i(\xi_i)=\xi_{i-1}, i\in\N\}\subset I^{\infty},$$
where $I=[0,1]$ and $I^{\infty}$ is equipped with the standard product topology. 

If $X\subset \R^2$ is a planar continuum, we say that $x\in X$ is {\em accessible} (from the complement of $X$), if there exists an arc $A\subset\R^2$ such that $A\cap X=\{x\}$. According to an old result of Bing \cite{Bing}, every chainable continuum can be embedded in the plane, making some points accessible and possibly leaving some inaccessible. In fact, if $X$ is indecomposable, there are going to be many inaccessible points in every planar embedding of $X$, see \cite{Maz}. For further results on planar embeddings of chainable continua and accessibility, see \eg the related results on the pseudo-arc in \cite{Bre,Lew, Sm},  unimodal inverse limit spaces in \cite{embed,AC}, Knaster continua in \cite{DT,MayThesis,May}, or hereditary decomposable chainable continua in \cite{MincTrans,Ozbolt}.	

In \cite{ABC-XS} (jointly with Henk Bruin and Jernej \v Cin\v c), we show that if $x=(x_0,x_1,x_2,\ldots)\in X=\underleftarrow{\lim}(I,f_i)$ is such that $x_i$ is {\em not in a zigzag of $f_i$} for every $i\in\N$, then we can embed $X$ in the plane with $x$ accessible, see \cite[Theorem 7.3]{ABC-XS}. Precise definition of what it means to be contained in a zigzag is given in Definition~\ref{def:zigzag}, see also Figure~\ref{fig:zigzag}, and the rest of Section~\ref{sec:main2} for some basic properties. Here we will shortly give an intuitive reason why this notion has an affect on accessibility. If $x$ is not in a zigzag of $f$, then we can ``permute" the graph of $f$ such that we ``expose" the point $(x,f(x))$ in the graph of $f$. To be more precise, for every $\eps>0$ there exists an embedding $\alpha\colon I\to I^2$, such that $|\pi_2(\alpha(y))-f(y)|<\eps$ for every $y\in I$, and the straight line joining $\alpha(x)$ with $(1,f(x))$ intersects $\alpha(I)$ only in $\alpha(x)$, see Figure~\ref{fig:exposing}. Here $\pi_2\colon I^2\to I$ is the projection on the second coordinate.

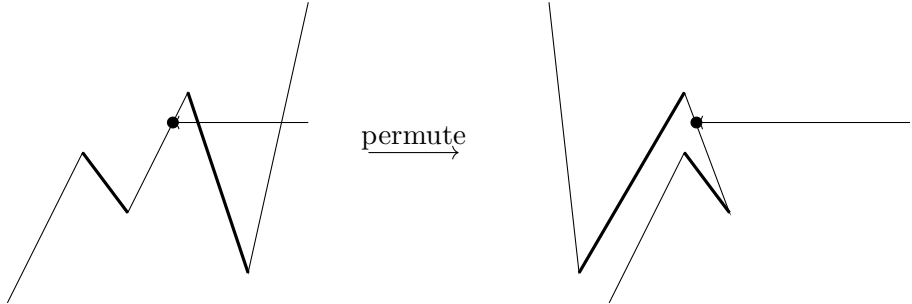
\begin{figure}[!ht]
	\centering
	\begin{tikzpicture}[scale=4]
	\draw (0,0)--(0.25,0.5)--(0.4,0.3)--(0.6,0.7)--(0.8,0.1)--(1,1);
	\draw[solid, fill] (0.55,0.6) circle (0.018);
	\draw[->] (1,0.6)--(0.55,0.6);
	\draw[very thick] (0.25,0.5)--(0.4,0.3);
	\draw[very thick] (0.6,0.7)--(0.8,0.1);
	
	\draw[->] (1.2,0.5)--(1.5,0.5);
	\node at (1.35,0.55) {\small permute};
	
	\draw (2,0)--(2.25,0.5)--(2.4,0.3)--(2.25,0.7);
	\draw[<-] (2.29,0.6)--(3,0.6);
	\draw[solid, fill] (2.29,0.6) circle (0.018);
	%\draw[thick] (2.6,0.7)--(2,0.7);
	\draw (2.25,0.7)--(1.9,0.1)--(1.8,1);
	\draw[very thick] (2.25,0.5)--(2.4,0.3);
	\draw[very thick] (2.25,0.7)--(1.9,0.1);
	\end{tikzpicture}
	\caption{Permuting the graph to expose points. Graph of $f$ is given on the left, with point $(x,f(x))$ denoted by a circle. The horizontal arc joining $(x,f(x))$ with $(1,f(x))$ intersects the graph multiple times. On the right we construct a permutation $\alpha\colon I\to I^2$ of the graph, exposing $\alpha(x)$. Note that $(x,f(x))$ in the boldface area cannot be exposed.}
	\label{fig:exposing}
\end{figure}

The results of this paper will mostly be stated for chainable continua which can be represented as inverse limits with a single bonding map $f\colon I\to I$. Not every chainable continuum is like that, see \eg \cite{Mah}. The reason for this restriction, other than simplicity of notation, is the dynamical nature of spaces $\underleftarrow{\lim}(I,f)$. It was shown by Barge and Martin in \cite{BaMa}, with the use of Brown's theorem \cite{Brown}, that every $\underleftarrow{\lim}(I,f)$ can be embedded in the plane as global attractor of a planar homeomorphism $F\colon\R^2\to\R^2$, which acts on the attractor as the {\em shift homeomorphism} given by $\sigma((x_0,x_1,x_2,\ldots))=(f(x_0),x_0,x_1,\ldots)$. It is still not completely clear which planar embeddings of $\underleftarrow{\lim}(I,f)$ allow $\sigma$ to be extended to a homeomorphism of the plane. This question was first asked by Boyland (in 2015) for unimodal maps $f$, see the discussion in Section~8 of \cite{AC}.

The ideas in this paper originated from the study of continuum $X_M=\underleftarrow{\lim}(I,f_M)$, where $f_M\colon I\to I$ is given in Figure~\ref{fig:Minc}. It was introduced by Piotr Minc in 2001, where he asks:

{\bf Question}\,(Minc \cite[Question 19 p. 335]{problems}){\bf .} Is there a planar embedding of $X_M$ such that $p=(1/2,1/2,\ldots)$ is accessible?

Note that $1/2$ is in a zigzag of $f$, so the theory from \cite{ABC-XS} does not help.
Actually, $1/2$ is in a zigzag of $f^n$ for every $n\in\N$, so it is not helpful if we represent $X_M$ as $\underleftarrow{\lim}(I,f_M^{n_i})$, where $(n_i)_{i\in\N}$ is any sequence of natural numbers. However, it turns out that there is another representation of $X_M$ in which coordinates of $p$ will not be in zigzags of bonding maps. We will construct a map $g\colon I\to I$ for which there is a homeomorphism $h\colon X_M\to\underleftarrow{\lim}(I,g)$ such that $h(p)=(1/2,1/2,1/2,\ldots)$, and such that $1/2$ is not in a zigzag of $g$, thus answering Minc's question in positive. See the graph of $g$ in Figure~\ref{fig:g}. Actually, in Section~\ref{sec:Minc} we show that every point of $X_M$ can be embedded accessibly, see Theorem~\ref{thm:minc}. We note that all the constructed embeddings are {\em thin}, \ie the planar representation can be covered with an arbitrary small chain of open and connected sets in the plane. 

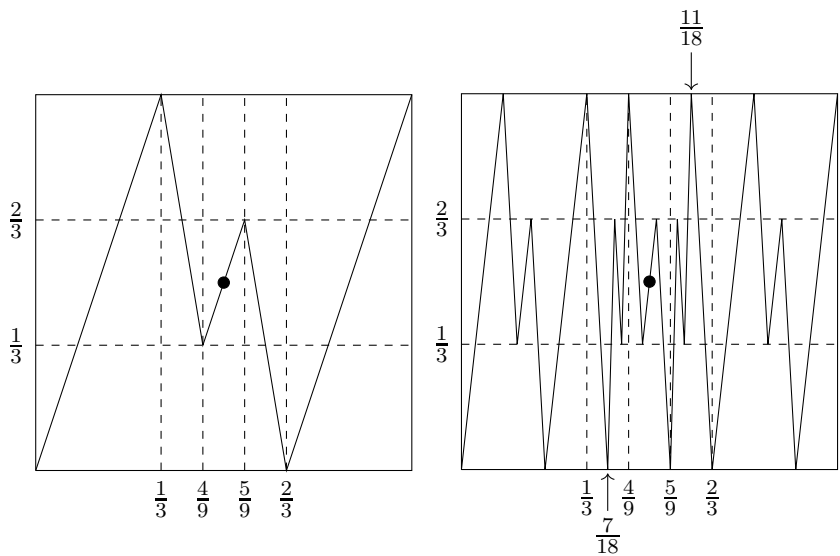
\begin{figure}[h!]
	\centering
	\vspace{-10pt}
	\begin{tikzpicture}[scale=5]
	\draw (0,0)--(0,1)--(1,1)--(1,0)--(0,0);
	\draw[dashed] (1/3,0)--(1/3,1);
	\draw[dashed] (2/3,0)--(2/3,1);
	\draw[dashed] (0,1/3)--(1,1/3);
	\draw[dashed] (0,2/3)--(1,2/3);
	\node[below] at (1/3,0) {\small $\frac 13$};
	\node[below] at (2/3,0) {\small $\frac 23$};
	\node[left] at (0,1/3) {\small $\frac 13$};
	\node[left] at (0,2/3) {\small $\frac 23$};
	\draw[dashed,thin] (4/9,0)--(4/9,1);
	\node[below] at (4/9,0) {\small $\frac 49$};
	\draw[dashed,thin] (5/9,0)--(5/9,1);
	\node[below] at (5/9,0) {\small $\frac 59$};
	
	\node[below] at (7/18,-0.175) {};
	\node[above] at (11/18,1.1) {};
	
	\draw[fill] (1/2,1/2) circle (0.015);
	
	\draw ( 0 , 0 ) --
	( 0.3333333333333333 , 1 ) --
	( 0.4444444444444444 , 0.3333333333333333 ) --
	( 0.5555555555555556 , 0.6666666666666666 ) --
	( 0.6666666666666666 , 0 ) --
	( 1 , 1 );
	
	\node[below] at (7/18,-0.19) {\small $ $};
	\end{tikzpicture}
	\begin{tikzpicture}[xscale=5,yscale=5]
	\draw (0,0)--(0,1)--(1,1)--(1,0)--(0,0);
	\draw[dashed] (1/3,0)--(1/3,1);
	\draw[dashed] (2/3,0)--(2/3,1);
	\draw[dashed] (0,1/3)--(1,1/3);
	\draw[dashed] (0,2/3)--(1,2/3);
	\node[below] at (1/3,0) {\small $\frac 13$};
	\node[below] at (2/3,0) {\small $\frac 23$};
	\node[left] at (0,1/3) {\small $\frac 13$};
	\node[left] at (0,2/3) {\small $\frac 23$};
	\draw[dashed,thin] (4/9,0)--(4/9,1);
	\node[below] at (4/9,0) {\small $\frac 49$};
	\draw[dashed,thin] (5/9,0)--(5/9,1);
	\node[below] at (5/9,0) {\small $\frac 59$};
	
	\draw[fill] (1/2,1/2) circle (0.015);
	
	\node[below] at (7/18,-0.1) {\small $\frac{7}{18}$};
	\draw[->] (7/18,-0.11)--(7/18,-0.01);
	\node[above] at (11/18,1.1) {\small $\frac{11}{18}$};
	\draw[->] (11/18,1.11)--(11/18,1.01);
	
	\draw ( 0 , 0.0 ) --
	( 0.1111111111111111 , 1.0 ) --
	( 0.14814814814814814 , 0.33333333333333326 ) --
	( 0.18518518518518517 , 0.6666666666666666 ) --
	( 0.2222222222222222 , 0.0 ) --
	( 0.3333333333333333 , 1.0 ) --
	( 0.3888888888888889 , 6.661338147750939e-16 ) --
	( 0.4074074074074074 , 0.6666666666666666 ) --
	( 0.42592592592592593 , 0.3333333333333346 ) --
	( 0.4444444444444444 , 0.9999999999999998 ) --
	( 0.48148148148148145 , 0.3333333333333336 ) --
	( 0.5185185185185185 , 0.666666666666666 ) --
	( 0.5555555555555556 , 0.0 ) --
	( 0.5740740740740741 , 0.6666666666666666 ) --
	( 0.5925925925925926 , 0.333333333333334 ) --
	( 0.6111111111111112 , 0.9999999999999989 ) --
	( 0.6666666666666666 , 0.0 ) --
	( 0.7777777777777778 , 0.9999999999999993 ) --
	( 0.8148148148148148 , 0.3333333333333336 ) --
	( 0.8518518518518519 , 0.6666666666666666 ) --
	( 0.8888888888888888 , 6.661338147750939e-16 ) --
	( 1 , 1.0 );
	\end{tikzpicture}
	\vspace{-10pt}
	\caption{Minc's map $f_M$ and its second iterate $f^2_M$, illustrating that $1/2$ is in a zigzag of $f_M^n$ for every $n\in\N$.}
	\label{fig:Minc}
\end{figure}

Finally, in Section~\ref{sec:general} we generalize the construction to $\underleftarrow{\lim}(I,f)$, where $f$ is assumed to be piecewise monotone, locally eventually onto (leo), and with eventually periodic critical points, see Corollary~\ref{cor:main}. The leo assumption is not very restrictive; any piecewise monotone interval map without restrictive intervals, periodic attractors, or wandering intervals is conjugate to a piecewise linear leo map, or semi-conjugate otherwise (see \eg \cite{dMvS}). Furthermore, every simplicial map $f$ has eventually periodic critical points. We note that Minc's map $f_M$ satisfies all the properties above. However, for the clarity of the exposition, we will explain the construction in the special case of $X_M$ before proceeding to the more general theory.

\section{Preliminaries}\label{sec:main}
Set of natural numbers will be denoted by $\N$ and $\N_0=\N\cup\{0\}$.  A {\em continuum} is a nonempty, compact, connected, metric space. An {\em arc} is a space homeomorphic to the unit interval $I=[0,1]$. Given two continua $X,Y$, a continuous function $f\colon X\to Y$ is called a {\em map}.  A map $f\colon I\to I$ is called {\em piecewise monotone} if there is $m\geq 0$, and points $0=c_0<c_1<\ldots<c_m<c_{m+1}=1$, such that $f|_{[c_i,c_{i+1}]}$ is strictly monotone for every $i\in\{0,\ldots,m\}$. For $i\in\{1,\ldots,m\}$, points $c_i$ are called {\em critical points of $f$}, and $\{c_1,\ldots,c_m\}$ is called {\em critical set of $f$}. For the simplicity of notation, we will often include $0=c_0$ and $c_{m+1}=1$ in the critical set.

Given a sequence of continua $X_i$, $i\in\N_0$, and maps $f_i\colon X_i\to X_{i-1}$, $i\in\N$, we define the {\em inverse limit space} of the inverse system $(X_i,f_i)$ as:
$$\underleftarrow{\lim}(X_i,f_i):=\{(\xi_0,\xi_1,\xi_2,\xi_3,\ldots): f_i(\xi_i)=\xi_{i-1}, i\in\N\}\subset \prod_{i=0}^{\infty} X_i,$$
and equip it with the product topology, \ie the smallest topology in which all coordinate projections $\pi_i\colon \underleftarrow{\lim}(X_i,f_i)\to X_i$, $i\in\N_0$ are continuous. Then $\underleftarrow{\lim}(X_i,f_i)$ is also a continuum. If there is a continuum $X$ such that $X_i=X$ for all $i\in\N$, the inverse limit space is denoted by $\underleftarrow{\lim}(X,f_i)$, and if additionally there is $f\colon X\to X$ such that $f_i=f$ for all $i\in\N$, it is denoted by $\underleftarrow{\lim}(X,f)$.

A {\em chain $\chain$} in a continuum $X$ is a set $\chain=\{\ell_1,\ldots,\ell_n\}$, where $\ell_i$, $i\in\{1,\ldots,n\}$ are non-empty open sets in $X$ such that $\ell_i\cap\ell_j\neq\emptyset$ if and only if $|i-j|\leq 1$. Sets $\ell_i$ are called {\em links} of $\chain$. Note that we do not necessarily assume that $\ell_i$ are connected sets in $X$ (and they most often will not be). {\em Mesh} of $\chain$ is the maximal diameter of all links of $\chain$. We say that $X$ is {\em chainable} if for every $\eps>0$ there is a chain in $X$ of mesh $<\eps$ which covers $X$. Every chainable continuum can be represented as $\underleftarrow{\lim}(I,f_i)$, for some maps $f_i\colon I\to I$ which can be assumed to be piecewise linear and surjective.

Given a map $f\colon I\to I$ and $n\in\N$, by $f^n$ we denote its $n$th iterate, \ie $f^1=f$ and $f^n=f^{n-1}\circ f$ for all $n>1$. Given a sequence of natural numbers $(n_i)_{i\in\N}$, the spaces $\underleftarrow{\lim}(I,f^{n_i})$ and $\underleftarrow{\lim}(I,f)$ are homeomorphic, with a homeomorphism given by $\underleftarrow{\lim}(I,f)\ni (\xi_0,\xi_1,\xi_2,\xi_3,\ldots)\mapsto(\xi_0,\xi_{n_1},\xi_{n_1+n_2},\xi_{n_1+n_2+n_3},\ldots)\in\underleftarrow{\lim}(I,f^{n_i})$. Moreover, if we are given a sequence $(n_i)_{i\in\N}$ of natural numbers, and maps $s_i,t_i,g_i\colon I\to I$ such that the diagram from Figure~\ref{fig:comm} commutes, then $\underleftarrow{\lim}(I,f)$ is homeomorphic to $\underleftarrow{\lim}(I,g_i)$. The homeomorphism between $\underleftarrow{\lim}(I,f^n_i)$ and $\underleftarrow{\lim}(I,g_i)$ is given by $(\xi_0,\xi_1,\xi_2,\xi_3,\ldots)\mapsto(s_1(\xi_1),s_2(\xi_2),s_3(\xi_3),\ldots)$. For the more general theory of homeomorphisms of inverse limits, see \cite{Miod}.

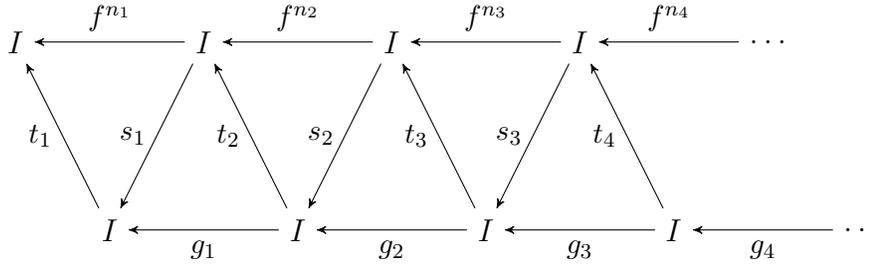
\begin{figure}[!ht]
	\centering
	\begin{tikzpicture}[->,>=stealth',auto, scale=2.5]
	\node (1) at (0,1) {$I$};
	\node (2) at (0.5,0) {$I$};
	\node (3) at (1,1) {$I$};
	\node (4) at (1.5,0) {$I$};
	\node (5) at (2,1) {$I$};
	\node (6) at (2.5,0) {$I$};
	\node (7) at (3,1) {$I$};
	\node (8) at (3.5,0) {$I$};
	\node (9) at (4,1) {$\ldots$};
	\node (10) at (4.5,0) {$\ldots$};
	\draw [->] (3) -- node[above]{\small $f^{n_1}$} (1);
	\draw [->] (5) -- node[above]{\small $f^{n_2}$} (3);
	\draw [->] (7) -- node[above]{\small $f^{n_3}$} (5);
	\draw [->] (9) -- node[above]{\small $f^{n_4}$} (7);
	\draw [->] (4) -- node[below]{\small $g_1$} (2);
	\draw [->] (6) -- node[below]{\small $g_2$} (4);
	\draw [->] (8) -- node[below]{\small $g_3$} (6);
	\draw [->] (10) -- node[below]{\small $g_4$} (8);
	\draw [->] (2) -- node[left]{\small $t_1$} (1);
	\draw [->] (3) -- node[left]{\small $s_1$} (2);
	\draw [->] (4) -- node[left]{\small $t_2$} (3);
	\draw [->] (5) -- node[left]{\small $s_2$} (4);
	\draw [->] (6) -- node[left]{\small $t_3$} (5);
	\draw [->] (7) -- node[left]{\small $s_3$} (6);
	\draw [->] (8) -- node[left]{\small $t_4$} (7);
	\end{tikzpicture}
	\caption{Commutative diagram.}
	\label{fig:comm}
\end{figure}

\section{Planar embeddings, zigzags, and accessibility}\label{sec:main2}

Given a continuum $X$, by its {\em planar embedding} we mean a homeomorphism $\nu\colon X\to \nu(X)\subset\R^2$. It is known that every chainable continuum can be embedded in the plane, \cite{Bing}. Given a continuum $X\subset\R^2$, and $\xi\in X$, we say that $\xi$ is {\em accessible from the complement} (or just {\em accessible}) if there is an arc $A\subset \R^2$ such that $A\cap X=\{\xi\}$. 

\begin{definition}\label{def:zigzag}
	Let $f: I\to I$ be a piecewise monotone map with 
	critical points $0<c_1< \ldots< c_{m}<1$. We say that $y\in I$ is 
	\emph{inside a zigzag of $f$} if for every $k\in\{1,\ldots,m-1\}$ such that $y\in [c_k,c_{k+1}]$, there exist $a,b\in I$
	such that $a<c_k<c_{k+1}<b\in I$ and either
	\begin{enumerate}
		\item $f(c_k)>f(c_{k+1})$ and $f|_{[a,b]}$ assumes its global minimum at $a$ and its global maximum
		at $b$, or
		\item $f(c_k)<f(c_{k+1})$ and $f|_{[a,b]}$ assumes its global maximum at $a$ and its 
		global minimum at $b$.
	\end{enumerate}
	See Figure~\ref{fig:zigzag}.
\end{definition}

\begin{figure}[!ht]
	\centering
	\begin{tikzpicture}[scale=4]
	\draw (0,0)--(1,0)--(1,1)--(0,1)--(0,0);
	\draw (0,0)--(1/5,1)--(2/5,1/4)--(3/5,3/4)--(4/5,1/2)--(1,1);
	\draw (2/5,-0.02)--(2/5,0.02);
	\node at (0.5, 0.8) { $f$};
	\node at (2/5, -0.07) {\scriptsize $a$};
	\draw (3/5,-0.02)--(3/5,0.02);
	\node at (3/5, -0.07) {\scriptsize $c_3$};
	\draw (4/5,-0.02)--(4/5,0.02);
	\node at (4/5, -0.07) {\scriptsize $c_4$};
	\draw (1,-0.02)--(1,0.02);
	\node at (1, -0.07) {\scriptsize $b$};
	\draw[thick] (3/5,3/4)--(4/5,1/2);
	\end{tikzpicture}
	\begin{tikzpicture}[scale=4]
	\draw (0,0)--(1,0)--(1,1)--(0,1)--(0,0);
	\draw (0,0)--(0.2,0.85)--(0.4,0.5)--(0.6,0.85)--(0.8,0.25)--(1,1);
	\draw (0,-0.02)--(0,0.02);
	\node at (0.5, 0.8) { $g$};
	\node at (0, -0.07) {\scriptsize $a$};
	\draw (0.2,-0.02)--(0.2,0.02);
	\node at (0.2, -0.07) {\scriptsize $c_1$};
	\draw (0.4,-0.02)--(0.4,0.02);
	\node at (0.4, -0.07) {\scriptsize $c_2$};
	\draw (3/5,-0.02)--(3/5,0.02);
	\node at (3/5, -0.07) {\scriptsize $c_3$};
	\draw (4/5,-0.02)--(4/5,0.02);
	\node at (4/5, -0.07) {\scriptsize $c_4$};
	\draw (1,-0.02)--(1,0.02);
	\node at (1, -0.07) {\scriptsize $b$};
	\draw[thick] (0.6,0.85)--(0.8,0.25);
	\draw[thick] (0.2,0.85)--(0.4,0.5);
	\end{tikzpicture}
	\caption{Point $y\in I$ is in a zigzag of $f$ if and only if $y\in (c_3,c_4)$. Point $y\in I$ is in a zigzag of $g$ if and only if $y\in (c_1,c_2)\cup (c_3,c_4)$ (in boldface).}
	\label{fig:zigzag}
\end{figure}
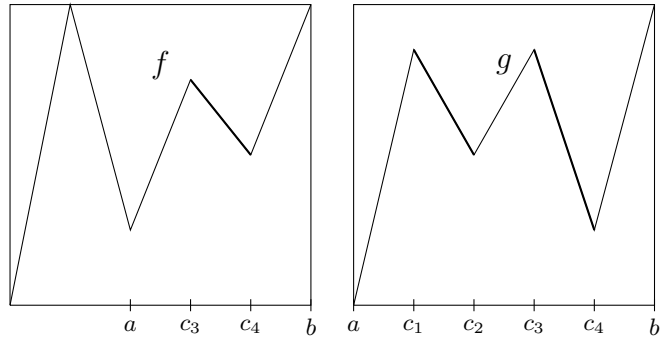

Note that if $\{c_1,\ldots,c_m\}$ are critical points of $f$, then \eg points in $[0,c_1]$ and $[c_m,1]$ are never in a zigzag of $f$. We can say more:

\begin{remark}\label{rem:zigzag}
	Note that if $f(c_k)\in\{0,1\}$, then for every $a<b$ such that $a<c_k<c_{k+1}<b$, $c_k$ is a local minimum or a local maximum of $f|_{[a,b]}$. Thus no point in $[c_k,c_{k+1}]$ is contained in a zigzag of $f$.
\end{remark}

The following lemma gives another criterion which determines when a point is not in a zigzag. It will be used in the proof of Theorem~\ref{thm:main}.

\begin{lemma}\label{lem:speczz}
	Let $f\colon I\to I$ be a piecewise monotone map, and $y\in I$. Assume that there exist $a<b\in I$ such that $y\in[a,b]$, $f(t)\not\in\{f(a),f(b)\}$ for all $t\in (a,b)$, and either
	\begin{enumerate}
		\item $f(a)\in\{0,1\}$ and $f|_{[y,b]}$ is one-to-one, or
		\item $f(b)\in\{0,1\}$, and $f|_{[a,y]}$ is one-to-one.
	\end{enumerate} 
	Then $y$ is not in a zigzag of $f$.
\end{lemma}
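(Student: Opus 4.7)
The plan is to argue by contradiction: suppose $y\in[c_k,c_{k+1}]$ lies in a zigzag of $f$, witnessed by some $a'<c_k<c_{k+1}<b'$ and one of configurations (1), (2) of Definition~\ref{def:zigzag}. I treat case~(1) of the lemma; case~(2) follows from case~(1) applied to $\hat f(t):=f(1-t)$ with the symmetric witnesses $\hat a=1-b$, $\hat b=1-a$, $\hat y=1-y$. Within case~(1) I may further reduce to $f(a)=0$, since $f(a)=1$ is handled by replacing $f$ with $1-f$, a substitution that preserves the zigzag notion while swapping configurations (i) and (ii).

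First I would pin down the positions of $a$ and $b$ relative to $c_k, c_{k+1}$. The one-to-one hypothesis on $f|_{[y,b]}$ forbids any critical point in $(y,b)$, forcing $b\le c_{k+1}$. The hypothesis $f(a)=0$, which is a global minimum of $f$, prevents $a$ from lying in the interior of the strictly monotone piece $(c_k,c_{k+1}]$, since attaining $0$ there would force $f(c_k)<0$ or $f(c_{k+1})<0$. Hence $a\le c_k$, and so $y,b\in[c_k,c_{k+1}]$ with $f|_{[y,b]}$ inheriting its direction of monotonicity from $f|_{[c_k,c_{k+1}]}$.

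Next I split on the zigzag configuration. In configuration~(i), $f$ is decreasing on $[c_k,c_{k+1}]$ and hence on $[y,b]$, so $f(y)>f(b)$. The set $f((a,b))$ is connected and disjoint from $\{0,f(b)\}$, and contains $f(y)>f(b)$, so it lies in $(f(b),1]$; continuity at $a$ then forces $0=f(a)\ge f(b)$, giving $f(b)=0$. Strict monotonicity of $f|_{[c_k,c_{k+1}]}$ then yields $b=c_{k+1}$ and $f(c_{k+1})=0$, and the orientation-reversed version of Remark~\ref{rem:zigzag} (with $c_{k+1}$ in the role of $c_k$) rules out zigzags on $[c_k,c_{k+1}]$, contradicting the assumption. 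In configuration~(ii), $f$ is increasing on $[y,b]$, the analogous connectedness argument gives $f((a,b))\subset(0,f(b))$, and the maximum at $a'$ provides $f(a')\ge f(c_{k+1})\ge f(b)$. A case split on the location of $a'$ finishes: if $a<a'<c_k$ then $a'\in(a,b)$ with $f(a')<f(b)$, contradicting $f(a')\ge f(b)$ together with $f(a')\ne f(b)$; if $a'=a$ then $f(a')=0<f(c_{k+1})\le f(a')$, contradiction; and if $a'<a$, then $a\in(a',b')$ and the minimum at $b'$ forces $f(b')=0$, producing a second global minimum of $f|_{[a',b']}$ strictly interior to $(a',b')$, which is ruled out by the extremality required by Definition~\ref{def:zigzag}.

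I expect the last subcase, $a'<a$ in configuration~(ii), to be the main obstacle: closing it requires committing to the convention that the global extremum in the zigzag definition is attained only at the designated endpoint of $[a',b']$ (the same convention that underlies the ``Thus'' in Remark~\ref{rem:zigzag}). The rest of the proof is a routine blend of continuity, connectedness, and strict monotonicity of $f$ on its laps.
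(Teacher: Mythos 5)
Your proof is correct and follows essentially the same route as the paper's: compute $f((a,b))=(0,f(b))$ from connectedness and the avoidance hypothesis, force the zigzag witness's left endpoint strictly below $a$, and then contradict the extremality required of the witness endpoints via the interior point $a$ with $f(a)=0$ (both arguments rely on the same convention, which you rightly flag, that the global extremum in Definition~\ref{def:zigzag} is attained only at the designated endpoint). The only organizational differences are that you dispatch the lemma's case (2) and the value $f(a)=1$ by symmetry rather than by repeating the argument, and that you handle configuration (i) by deriving $f(b)=0$ and invoking Remark~\ref{rem:zigzag}, where the paper rules that configuration out at the start by observing that $f((y,b))=(f(y),f(b))$ forces the lap to be increasing.
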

\begin{proof}
	Assume first that $(1)$ holds, and $f(a)=0$. Then $f((a,b))=(0,f(b))$, and since $f|_{[y,b]}$ is one-to-one, also $f((y,b))=(f(y),f(b))$. Thus for adjacent critical points $c_k,c_{k+1}$ of $f$ such that $[y,b]\subseteq [c_k,c_{k+1}]$, it holds that $f(c_k)<f(c_{k+1})$. If $y$ is in a zigzag of $f$, there exist $\alpha,\beta\in I$ such that $\alpha<c_k<c_{k+1}<\beta$, and $f|_{[\alpha,\beta]}$ assumes its global maximum in $\alpha$, and global minimum in $\beta$. However, since $f(t)<f(b)$ for all $t\in [a,b)$, it must hold that $\alpha<a$. But then $a\in[\alpha,\beta]$, and since $f(a)=0$, it follows that $a$ is a local minimum of $f|_{[\alpha,\beta]}$, which is a contradiction with $\beta$ being the global minimum.
	
	Assume that $(1)$ holds, with $f(a)=1$. Then $f((a,b))=(f(b),1)$, thus also $f((y,b))=(f(b),f(y))$. It follows that if $[y,b]\subseteq [c_k,c_{k+1}]$, then $f(c_k)>f(c_{k+1})$. So if $y$ is in a zigzag of $f$, there exist $\alpha,\beta\in I$ such that $\alpha<c_k<c_{k+1}<\beta$, $\alpha$ is the global minimum, and $\beta$ is the global maximum of $f|_{[\alpha,\beta]}$. However, since $f(t)>f(b)$ for all $t\in[a,b)$, it must hold that $\alpha<a$. But then $a\in [\alpha,\beta]$, and since $f(a)=1$, it follows that $a$ is a local maximum of $f|_{[\alpha,\beta]}$, which is a contradiction.
	
	Case $(2)$ follows similarly.
\end{proof}

\begin{proposition}\label{prop:comp}
	Let $f,g\colon I\to I$ be piecewise monotone maps. If $y\in I$ is in a zigzag of $g\circ f$, then $y$ is in a zigzag of $f$, or $f(y)$ is in a zigzag of $g$.
\end{proposition}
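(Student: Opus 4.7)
The plan is to lift the zigzag witnesses of $h := g \circ f$ at $y$ to witnesses for either a zigzag of $f$ at $y$ or a zigzag of $g$ at $f(y)$. Suppose $y$ is in a zigzag of $h$ with adjacent critical points $d_k < d_{k+1}$ of $h$ containing $y$, and witnesses $a < d_k < d_{k+1} < b$. WLOG condition~(1) holds: $h(d_k) > h(d_{k+1})$, $h(a) < h(t)$ for all $t \in (a,b]$, and $h(b) > h(t)$ for all $t \in [a,b)$. (I read the extremum conditions in the definition as \emph{strict}: this is consistent with Figure~\ref{fig:zigzag} and is tacitly used in the proof of Lemma~\ref{lem:speczz}.)

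First, structural reductions. Since $\Crit(h) \subseteq \Crit(f) \cup f^{-1}(\Crit(g))$ and $d_k, d_{k+1}$ are adjacent in $\Crit(h)$, the open interval $(d_k, d_{k+1})$ contains no critical points of $f$ nor preimages of critical points of $g$. Hence $f|_{[d_k, d_{k+1}]}$ is monotone; WLOG increasing, so $u := f(d_k) < v := f(d_{k+1})$. Consequently $g$ is strictly decreasing on $[u,v]$ and has no critical points in $(u,v)$.

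Next, the extremum transfer. Set $m := \min f|_{[a,b]}$ and $M := \max f|_{[a,b]}$, so $f([a,b]) = [m,M]$. Using $h|_{[a,b]} = g|_{[m,M]} \circ f|_{[a,b]}$ and the surjectivity of $f|_{[a,b]}$ onto $[m,M]$, one deduces that $g|_{[m,M]}$ has a strict unique global minimum at $f(a)$ and a strict unique global maximum at $f(b)$. Combined with strict monotonicity of $g$ on $[u,v]$, this excludes $f(a), f(b) \in [u,v]$ (e.g.\ $f(a) \in [u,v]$ would give $g(f(a)) \ge g(v) = h(d_{k+1}) > h(a) = g(f(a))$, a contradiction). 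Hence $f(a), f(b) \in [m,u) \cup (v,M]$.

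The proof concludes by a case split on the locations of $f(a), f(b)$. In the \emph{compatible} case $f(a) \in [m,u)$, $f(b) \in (v,M]$, let $\tilde u, \tilde v$ be adjacent critical points of $g$ with $f(y) \in [\tilde u,\tilde v]$, so $\tilde u \le u$ and $\tilde v \ge v$. Strictness forces $f(a) < \tilde u$ and $f(b) > \tilde v$ (if $f(a) \in [\tilde u, u)$ then $g(f(a)) > g(u) > g(v) > g(f(a))$, absurd). Then $\alpha := f(a), \beta := f(b)$ witness $f(y)$ in a zigzag of $g$ under condition~(1). In the \emph{inverted} case $f(a) \in (v,M]$, $f(b) \in [m,u)$, take $\alpha' \in [a,d_k]$ attaining $M$ and $\beta' \in [d_{k+1},b]$ attaining $m$; the monotonicity of $f$ on $[d_k, d_{k+1}]$ guarantees these exist and satisfy $\alpha' < d_k < d_{k+1} < \beta'$, and they witness $y$ in a zigzag of $f$ under condition~(2). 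The main obstacle I expect is handling the \emph{same-side} configurations, where $f(a), f(b)$ both lie in $[m,u)$ or both in $(v,M]$; here the basic witness choices fail, and one must use the strict-extrema structure of $g|_{[m,M]}$ on the relevant side to either construct $g$-zigzag witnesses inside $[m,u)$ or $(v,M]$, or to produce $f$-zigzag witnesses from the complementary structure of $f$ on $[a,b]$.
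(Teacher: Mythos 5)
Your approach is a direct one (lifting the zigzag witnesses of $g\circ f$ at $y$), whereas the paper argues by contraposition, assembling maximal monotone intervals of $g\circ f$ from those of $f$ and $g$; that difference is legitimate, your strict reading of the extremum conditions in Definition~\ref{def:zigzag} is the one the paper itself tacitly uses, and your structural reductions are correct (adjacent critical points of $h=g\circ f$ bound an interval on which $f$ is monotone and whose image contains no interior critical point of $g$; the strict extrema of $h|_{[a,b]}$ at $a,b$ transfer to strict extrema of $g|_{[m,M]}$ at $f(a),f(b)$). The problem is that the proof is not finished: you explicitly leave open the ``same-side'' configurations in which $f(a),f(b)$ both lie in $[m,u)$ or both in $(v,M]$, saying only that one ``must use the strict-extrema structure'' to produce witnesses. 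As written this is a genuine gap --- a case is identified and nothing is proved about it.

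The missing idea is that the same-side case is vacuous. Strictness of the extrema of $h$ at $a$ and $b$ implies that $f|_{[a,b]}$ attains the value $f(a)$ only at $a$ and the value $f(b)$ only at $b$ (otherwise $h$ would re-attain $h(a)$ or $h(b)$ in $(a,b)$); note also $f(a)\neq f(b)$ since $h(a)<h(b)$. If now $f(a),f(b)<u$, then $f(y)\geq u>\max\{f(a),f(b)\}$ with $a<y<b$, and the intermediate value theorem applied to $f$ on $[a,y]$ (when $f(a)<f(b)$) or on $[y,b]$ (when $f(b)<f(a)$) yields an interior point of $(a,b)$ where $f$ takes the value $f(b)$, respectively $f(a)$ --- a contradiction; the case $f(a),f(b)>v$ is symmetric. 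The same ``attained only at the endpoint'' argument is also needed, and missing, at two smaller places: in your inverted case you assert that $M$ is attained on $[a,d_k]$ and $m$ on $[d_{k+1},b]$, but monotonicity of $f$ on $[d_k,d_{k+1}]$ alone does not decide on which side those extrema occur (ruling out the wrong side again uses the endpoint-uniqueness of $f(a)$ and $f(b)$); and in the compatible case you should verify that $\tilde u,\tilde v$ are interior critical points of $g$ and, when $y$ (hence possibly $f(y)$) is itself a critical point, supply witnesses for both adjacent critical intervals, since Definition~\ref{def:zigzag} quantifies over all of them. All of this is repairable, but the submitted argument does not yet establish Proposition~\ref{prop:comp}.
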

\begin{proof}
	Assume that $y$ is not in a zigzag of $f$ and $f(y)$ is not in a zigzag of $g$.

	{\bf Claim 1.} We claim that for every  $a<y<b$ such that $g\circ f((a,b))=(g(f(a)),g(f(b)))$, there exists a maximal interval $J\subset [a,b]$ such that $y\in J$, and $g\circ f|_J$ is monotone increasing. 
	
	{\bf (a)} Assume that $f|_{[a,b]}$ is increasing. Thus $f((a,b))=(f(a),f(b))$, and $g|_{[f(a),f(b)]}$ is also increasing, and $g((f(a),f(b)))=(g(f(a)),g(f(b)))$. 
		
	Let $\alpha<\beta$ be such that $a\leq \alpha\leq y\leq\beta\leq b$,  $f|_{[\alpha,\beta]}$ is monotone, and such that $[\alpha,\beta]\ni y$ is a maximal such interval. If $y$ is a critical point of $f$, then we choose $[\alpha,\beta]$ such that $f|_{[\alpha,\beta]}$ is increasing.
		
	Assume that $y$ is not a critical point of $f$.
	If $\alpha=a$, or $\beta=b$, then $f|_{[\alpha,\beta]}$ is obviously increasing. Furthermore, since we assumed that $y$ is not in a zigzag of $f$, if $a<\alpha<\beta<b$, then $f|_{[\alpha,\beta]}$ must also be increasing.
		
	Similarly, let $\gamma<\delta$ be such that $f(a)\leq \gamma\leq f(y)\leq \delta\leq f(b)$, $g|_{[\gamma,\delta]}$ is monotone, and $[\gamma,\delta]\ni f(y)$ is a maximal such interval. Again, if $f(y)$ is a critical point of $g$, we choose $[\gamma,\delta]$ such that $g|_{[\gamma,\delta]}$ is increasing. If $f(y)$ is not a critical point of $g$, since $f(y)$ is not in a zigzag of $g$, similarly as before we conclude that $g|_{[\gamma,\delta]}$ is increasing.
		
	Define $J:=f^{-1}([\gamma,\delta])\cap[\alpha,\beta]$. Then $J$ is an interval, and it is a maximal interval which contains $y$ such that $g\circ f|_J$ is monotone. Moreover, since $J\subset [\alpha,\beta]$, and $f(J)\subset [\gamma,\delta]$, and $f|_{[\alpha,\beta]}$, $g|_{[\gamma,\delta]}$ are monotone increasing, it follows that $g\circ f|_J$ is also monotone increasing.

	{\bf (b)} Assume that $f|_{[a,b]}$ is decreasing. Thus $f((a,b))=(f(b),f(a))$,  $g|_{[f(a),f(b)]}$ is also decreasing, and $g((f(b),f(a)))=(g(f(a)),g(f(b)))$. We take $\alpha,\beta$ as in the previous paragraph, but this time if $y$ is a critical point of $f$, we choose such that $f|_{[\alpha,\beta]}$ is decreasing. Again we conclude that $f|_{[\alpha,\beta]}$ is decreasing in any case. Similarly we find $\gamma,\delta$ as before, but if $f(y)$ is a critical point of $g$, we choose $g|_{[\gamma,\delta]}$ to be decreasing. We conclude that $g|_{[\gamma,\delta]}$ is decreasing in any case. So  $J:=f^{-1}([\gamma,\delta])\cap[\alpha,\beta]$ is again a maximal interval which contains $y$ such that $f|_J$ is monotone. Since in this case $f|_{[\alpha,\beta]}$ and $g|_{[\gamma,\delta]}$ are both decreasing, $g\circ f|_J$ is again monotone increasing.

	{\bf Claim 2.} We claim that for every  $a<y<b$ such that $g\circ f((a,b))=(g(f(b)),g(f(a)))$, there exists a maximal interval $J\subset [a,b]$ such that $y\in J$, and $g\circ f|_J$ is monotone decreasing.
	
	{\bf (a)} Assume that $f|_{[a,b]}$ is increasing. Thus $f((a,b))=(f(a),f(b))$, and $g|_{[f(a),f(b)]}$ is decreasing, $g((f(a),f(b)))=(g(f(b)),g(f(a)))$.  We define $\alpha,\beta,\gamma,\delta$ as before, and this time we conclude that $f|_{[\alpha,\beta]}$ is increasing, and $g|_{[\gamma,\delta]}$ is decreasing. Then  $J:=f^{-1}([\gamma,\delta])\cap[\alpha,\beta]$ is again a maximal interval which contains $y$ such that $f|_J$ is monotone. Since in this case $f|_{[\alpha,\beta]}$ is increasing, and $g|_{[\gamma,\delta]}$ is decreasing, $g\circ f|_J$ is monotone decreasing.
	
	{\bf (b)} Assume that $f|_{[a,b]}$ is decreasing. Thus $f((a,b))=(f(b),f(a))$,  $g|_{[f(a),f(b)]}$ is increasing, and $g((f(b),f(a)))=(g(f(b)),g(f(a)))$. Now $f|_{[\alpha,\beta]}$ is decreasing, and $g|_{[\gamma,\delta]}$ is increasing. Then $J:=f^{-1}([\gamma,\delta])\cap[\alpha,\beta]$ is a maximal interval which contains $y$ such that $f|_J$ is monotone, and $f|_J$ is monotone decreasing.
	
	Recall that if $y$ is in a zigzag of $g\circ f$, then there are $a<y<b$ such that for every maximal interval $J\ni y$ such that $g\circ f|_J$ is monotone, either
	\begin{enumerate}
		\item $g\circ f((a,b))=(g(f(a)),g(f(b)))$, and $g\circ f|_J$ is decreasing, or
		\item $g\circ f((a,b))=(g(f(b)),g(f(a)))$, and $g\circ f|_J$ is increasing.
	\end{enumerate}
	Thus Claims~1 and 2 imply that $y$ is not in a zigzag of $g\circ f$.
\end{proof}

Let $X$ be a continuum and $\nu\colon X\to \nu(X)\subset\R^2$ be an embedding of $X$ in the plane. We say that $\nu$ is a {\em thin embedding} (also called $C$-embedding in \cite{ABC-q}) if for every $\eps>0$ there is a chain $\chain=\{\ell_1,\ldots,\ell_n\}$ of $\nu(X)$ which covers $\nu(X)$, and such that $\ell_i$ is a connected set in $\R^2$ for every $i\in\{1,\ldots,n\}$.

The following theorem gives a connection between accessibility and zigzags in bonding maps.

\begin{theorem}\cite[Theorem~7.3]{ABC-XS}\label{thm:zigzag}
	Let $X=\underleftarrow{\lim}\{I, f_i\}$ where each $f_i\colon I\to I$ 
	is a continuous piecewise monotone surjection. 
	If $x=(x_0, x_1, x_2, \dots)\in X$ is such that for each $i\in\N$, $x_i$ is not inside a zigzag of $f_i$, then there exists a thin embedding $\nu\colon X\to \nu(X)$ of $X$	in the plane such that $\nu(x)$ is an accessible point of $\nu(X)$.
\end{theorem}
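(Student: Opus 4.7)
The plan is to build a thin planar embedding of $X$ by the standard nested-chain method, adapted to the ``exposing'' picture of Figure~\ref{fig:exposing}. I proceed level by level: at stage $n$ I want a thin chain $\chain_n$ in the plane covering an approximate copy of $X$, with the property that the link $\ell_n^{x}\in\chain_n$ containing the image of $x$ reaches the right-hand edge of the ambient rectangle, so that it is ``exposed'' in a direction that is preserved by all later refinements.

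First I would make the single-level exposing precise: for a piecewise monotone surjection $f\colon I\to I$ and a point $y\in I$ that is not in a zigzag of $f$, for every $\eps>0$ there is an embedding $\alpha\colon I\to I^2$ with $|\pi_2(\alpha(t))-f(t)|<\eps$ for all $t\in I$, such that the horizontal segment from $\alpha(y)$ to the right edge of $I^2$ meets $\alpha(I)$ only at $\alpha(y)$. The construction identifies which laps of $f$ straddling $y$ must be ``flipped'' to the opposite side of $y$: if $y\in[c_k,c_{k+1}]$ with $f(c_k)>f(c_{k+1})$ (case~(1) of Definition~\ref{def:zigzag}), the non-zigzag hypothesis gives, for every $a<c_k<c_{k+1}<b$, that either $f|_{[a,b]}$ does not take its global minimum at $a$ or not its global maximum at $b$, so some lap outside $[c_k,c_{k+1}]$ is non-extremal and can be swapped across without producing crossings. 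Using Remark~\ref{rem:zigzag} to handle laps adjacent to $0$ and $1$, I iterate this swap until every lap originally standing between $\alpha(y)$ and the right edge has been moved above or below; this yields the picture in Figure~\ref{fig:exposing}.

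Next I assemble these single-level embeddings inductively. Fix a null sequence $\eps_n\searrow 0$. Suppose I already have a thin chain $\chain_n$ covering an embedded copy of the $n$-th partial graph $\{(t_0,\ldots,t_n):f_i(t_i)=t_{i-1}\}$, with $\ell_n^x$ exposed to the right. To refine to level $n+1$, I apply the single-level exposing construction to $f_{n+1}$ with $y=x_{n+1}$, performed \emph{inside} each link of $\chain_n$ via the local graph of $f_{n+1}$ over that link; inside $\ell_n^x$ I orient the construction so that the exposure direction for $x_{n+1}$ aligns with the one already chosen for $x_n$. The resulting chain $\chain_{n+1}$ is thin (its links are the connected plane regions traced out by the permuted graph), refines $\chain_n$, has mesh $<\eps_{n+1}$, and by construction $\ell_{n+1}^x\subset \ell_n^x$ still reaches the right boundary.

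Define $\nu\colon X\to\R^2$ by $\nu(\xi)=\bigcap_n \ell_n^\xi$. Thinness is built in. Accessibility of $\nu(x)$ follows by concatenating the horizontal witness segments: at level $n$ the segment from $\ell_n^x$ to the right edge misses every other link of $\chain_n$, so it stays in the corresponding channel at every later level; taking a nested sequence of such segments together with a short arc inside $\bigcap_n \ell_n^x$ produces an arc $A\subset\R^2$ with $A\cap\nu(X)=\{\nu(x)\}$. The main obstacle I anticipate is the inductive compatibility in the third paragraph: showing that the ``flipped'' laps of $f_{n+1}$ can be routed through neighboring links of $\chain_n$ without disturbing the exposures recorded at lower levels, and that the resulting plane configuration still forms a single well-ordered chain. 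This orientation bookkeeping is precisely where the non-zigzag hypothesis at \emph{every} level is used.
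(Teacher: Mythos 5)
The paper does not actually prove this statement: it is imported verbatim as \cite[Theorem~7.3]{ABC-XS}, so there is no in-paper proof to compare against. Your outline does follow the same general strategy as that reference (permute the graph of each bonding map to expose the relevant point, build a nested sequence of thin planar chains with a coherent exposure direction, define the embedding as the intersection of links, and witness accessibility by nested horizontal arcs), so the plan is sound in spirit.

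However, as a proof it has a genuine gap, and you have in fact pointed at it yourself: the inductive compatibility step in your third paragraph is asserted, not established, and it is precisely where all the work lies. A lap of $f_{n+1}$ that must be flipped to expose $x_{n+1}$ inside $\ell_n^{x}$ is not localized to $\ell_n^{x}$ --- it is a strand running through many links of $\chain_n$, and flipping it permutes the vertical order of strands in every link it traverses. You must show that this global reordering (i) can be realized without introducing crossings, (ii) preserves the chain structure of $\chain_{n+1}$ (connected links meeting consecutively), and (iii) never moves a strand across the horizontal witness segments already fixed at levels $\le n$. None of this follows from the single-level non-zigzag condition alone; in \cite{ABC-XS} it requires a careful combinatorial bookkeeping of orderings of branches and compatible permutations across all levels, which occupies most of that paper. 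A second, smaller gap: your single-level exposing claim is also only sketched --- the step from ``the global extremum of $f|_{[a,b]}$ is not attained at the endpoint'' to ``some lap can be swapped across without producing crossings, and the iteration terminates'' needs an argument (including the degenerate cases where $x_i$ is a critical point or lies in $[0,c_1]\cup[c_m,1]$, cf.\ Remark~\ref{rem:zigzag}). As written, the proposal is a correct high-level outline of the known proof rather than a proof.
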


\begin{theorem}\label{thm:representations}
	Let $X=\underleftarrow{\lim}(I,f)$, and assume that there exist sequences $(n_i)_{i\in\N}\subset\N$, $(s_i)_{i\in\N},(t_i)_{i\in\N}$, where $s_i,t_i\colon I\to I$ are onto maps for every $i\in\N$, such that $t_i\circ s_i=f^{n_i}$ for every $i\in\N$. Define $g_i=s_i\circ t_{i+1}$ for $i\in\N$; then the diagram in Figure~\ref{fig:comm} commutes. Let $x=(x_0,x_1,x_2,\ldots)\in X$. If $s_i(x_{n_i})$ is not in a zigzag of $g_{i-1}$ for every $i\geq 2$, then there exists a thin embedding $\nu\colon X\to\R^2$ such that $\nu(x)$ is accessible.
\end{theorem}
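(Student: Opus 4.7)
The plan is to exploit the homeomorphism $X \cong \underleftarrow{\lim}(I, g_i)$ afforded by the commutative diagram of Figure~\ref{fig:comm} and reduce the claim to a direct application of Theorem~\ref{thm:zigzag} in the new inverse limit representation.

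First I would make the homeomorphism $h \colon X \to \underleftarrow{\lim}(I, g_i)$ explicit. As recorded in Section~\ref{sec:main}, $h$ is the composition of the canonical identification $\underleftarrow{\lim}(I, f) \to \underleftarrow{\lim}(I, f^{n_i})$, namely $(x_0, x_1, x_2, \ldots) \mapsto (\xi_0, \xi_1, \xi_2, \ldots)$ with $\xi_i := x_{n_1 + n_2 + \cdots + n_i}$, followed by the homeomorphism $\underleftarrow{\lim}(I, f^{n_i}) \to \underleftarrow{\lim}(I, g_i)$ induced by the vertical maps $s_i$, sending $(\xi_0, \xi_1, \xi_2, \ldots)$ to $(s_1(\xi_1), s_2(\xi_2), s_3(\xi_3), \ldots)$. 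Compatibility with the bonding maps $g_i = s_i \circ t_{i+1}$ is a direct check using $t_{i+1} \circ s_{i+1} = f^{n_{i+1}}$ and $f^{n_{i+1}}(\xi_{i+1}) = \xi_i$.

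Next I would match the hypothesis against Theorem~\ref{thm:zigzag}. With the shorthand $x_{n_i} := \xi_i$ used in the statement, the $(i{-}1)$-th coordinate of $h(x)$ equals $s_i(x_{n_i})$, and the assumption that this is not in a zigzag of $g_{i-1}$ for every $i \geq 2$ says exactly that the $j$-th coordinate of $h(x)$ is not in a zigzag of $g_j$ for each $j \geq 1$. This is precisely the hypothesis of Theorem~\ref{thm:zigzag} applied to $\underleftarrow{\lim}(I, g_i)$ with distinguished point $h(x)$, which therefore yields a thin planar embedding $\nu' \colon \underleftarrow{\lim}(I, g_i) \to \R^2$ such that $\nu'(h(x))$ is accessible.

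Finally I would set $\nu := \nu' \circ h \colon X \to \R^2$. Since $h$ is a homeomorphism of continua, $\nu$ is a planar embedding of $X$, the image $\nu(x) = \nu'(h(x))$ is accessible, and thinness transfers automatically because it is a property of the planar point set $\nu(X) = \nu'(\underleftarrow{\lim}(I, g_i))$, independent of the chosen inverse limit representation. The only genuine obstacle I anticipate is the index bookkeeping in matching the hypothesis with Theorem~\ref{thm:zigzag}; once the explicit coordinatewise form of $h$ is pinned down, the statement follows immediately by invoking the previously established theorem.
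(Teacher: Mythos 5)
Your proposal is correct and follows essentially the same route as the paper: pass to $\underleftarrow{\lim}(I,g_i)$ via the homeomorphism $h$ induced by the $s_i$, apply Theorem~\ref{thm:zigzag} there, and pull the resulting thin embedding back by composing with $h$. Your extra care with the index bookkeeping (reading $x_{n_i}$ as the coordinate at the partial sum $n_1+\cdots+n_i$) is the right reading of the paper's slightly abusive notation, but it does not change the argument.
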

\begin{proof}
	Let $h\colon X\to\underleftarrow{\lim}(I,g_i)$ be a homeomorphism given by $$h((\xi_0,\xi_1,\xi_2,\ldots))=(s_1(\xi_{n_1}),s_2(\xi_{n_2}),s_3(\xi_{n_3}),\ldots).$$
	By Theorem~\ref{thm:zigzag}, there is a thin embedding $\mu\colon\underleftarrow{\lim}(I,g_i)\to\R^2$ such that $\mu(h(x))$ is accessible. Then $\nu:=\mu\circ h\colon X\to\R^2$ is a thin embedding of $X$, and $\nu(x)$ is accessible.
\end{proof}

\section{Embeddings of Minc's continuum}\label{sec:Minc}

In this section we show how to embed every point of Minc's continuum $X_M$ accessibly. It is important to understand this example since the procedure generalizes to a much wider class of chainable continua. The generalization will be given in the next section.

Recall that $X_M=\underleftarrow{\lim}(I,f_M)$, where $f_M$ is given in Figure~\ref{fig:Minc}. We first construct maps $s,t,s',t'\colon I\to I$ such that $t\circ s=f_M^2$ and $t'\circ s'=f_M^2$. 

We define $s, t\colon I\to I$ as
\vspace{-20pt}
\begin{multicols}{2}
	\begin{equation*}
	s(y):=\begin{cases}
	\frac{7}{18}(1-f^2_M(x)), & y\in[0,\frac{7}{18}]\\
	y, & y\in[\frac{7}{18},1],
	\end{cases}
	\end{equation*}\break
	\begin{equation*}
	t(y):=\begin{cases}
	1-\frac{18}{7}y, & y\in[0,\frac{7}{18}],\\
	f^2_M(y), & y\in[\frac{7}{18},1],
	\end{cases}
	\end{equation*}
\end{multicols}

see Figure~\ref{fig:sandt}. Note that $f_M^2(7/18)=0$, so $s$ and $t$ are well-defined and continuous. Furthermore, if $y\in[0,7/18]$, then $s(y)=7/18(1-f_M^2(y))$, and $s(y)\in [0,7/18]$. So $t(s(y))=1-\frac{18}{7}(\frac{7}{18}(1-f_M^2(y)))=f_M^2(y)$. If $y\in[7/18,1]$, then $s(y)=y$, hence $t(s(y))=t(y)=f_M^2(y)$. It follows that $t\circ s=f_M^2$.

\begin{figure}[h!]
	\centering
	\begin{tikzpicture}[xscale=5,yscale=5]
	\draw (0,0)--(0,1)--(1,1)--(1,0)--(0,0);
	\draw[dashed] (7/18,0)--(7/18,1);
	\draw[dashed] (0,7/18)--(1,7/18);
	\node[below] at (7/18,0) {\small $\frac{7}{18}$};
	\node[left] at (0,7/18) {\small $\frac{7}{18}$};
	\draw[dashed] (1/9,0)--(1/9,1);
	\draw[dashed] (0,7/54)--(1,7/54);
	\node[below] at (1/9,0) {\small $\frac{1}{9}$};
	\node[left] at (0,7/54) {\small $\frac{7}{54}$};
	\draw[dashed] (2/9,0)--(2/9,1);
	\draw[dashed] (0,14/54)--(1,14/54);
	\node[below] at (2/9,0) {\small $\frac{2}{9}$};
	\node[left] at (0,14/54) {\small $\frac{14}{54}$};
	\draw[dashed] (1/3,0)--(1/3,1);
	\node[below] at (1/3,0) {\small $\frac{1}{3}$};
	
	\node[below] at (7/18,-0.175) {};
	\node[above] at (11/18,1.1) {};

	\draw ( 0 , 0.3888888888888889 ) --
	( 0.1111111111111111 , 0 ) --
	( 0.14814814814814814 , 0.25925925925925924 ) --
	( 0.18518518518518517 , 0.12962962962962962 ) --
	( 0.2222222222222222 , 0.3888888888888889 ) --
	( 0.3333333333333333 , 0 ) --
	( 0.3888888888888889 , 0.3888888888888889 ) --
	( 1 , 1 );
	\end{tikzpicture}
	\begin{tikzpicture}[xscale=5,yscale=5]
	\draw (0,0)--(0,1)--(1,1)--(1,0)--(0,0);
	\draw[dashed] (1/3,0)--(1/3,1);
	\draw[dashed] (2/3,0)--(2/3,1);
	\draw[dashed] (0,1/3)--(1,1/3);
	\draw[dashed] (0,2/3)--(1,2/3);
	\node[below] at (1/3,0) {\small $\frac 13$};
	\node[below] at (2/3,0) {\small $\frac 23$};
	\node[left] at (0,1/3) {\small $\frac 13$};
	\node[left] at (0,2/3) {\small $\frac 23$};
	\draw[dashed,thin] (4/9,0)--(4/9,1);
	\node[below] at (4/9,0) {\small $\frac 49$};
	\draw[dashed,thin] (5/9,0)--(5/9,1);
	\node[below] at (5/9,0) {\small $\frac 59$};

	\node[below] at (7/18,-0.1) {\small $\frac{7}{18}$};
	\draw[->] (7/18,-0.11)--(7/18,-0.01);
	
	\draw ( 0 , 1 ) --
	( 0.3888888888888889 , 6.661338147750939e-16 ) --
	( 0.4074074074074074 , 0.6666666666666666 ) --
	( 0.42592592592592593 , 0.3333333333333346 ) --
	( 0.4444444444444444 , 0.9999999999999998 ) --
	( 0.48148148148148145 , 0.3333333333333336 ) --
	( 0.5185185185185185 , 0.666666666666666 ) --
	( 0.5555555555555556 , 0.0 ) --
	( 0.5740740740740741 , 0.6666666666666666 ) --
	( 0.5925925925925926 , 0.333333333333334 ) --
	( 0.6111111111111112 , 0.9999999999999989 ) --
	( 0.6666666666666666 , 0.0 ) --
	( 0.7777777777777778 , 0.9999999999999993 ) --
	( 0.8148148148148148 , 0.3333333333333336 ) --
	( 0.8518518518518519 , 0.6666666666666666 ) --
	( 0.8888888888888888 , 6.661338147750939e-16 ) --
	( 1 , 1.0 );
	\end{tikzpicture}
	\vspace{-15pt}
	\caption{Graphs of maps $s$ and $t$. Note that $t\circ s=f_M^2$.}
	\label{fig:sandt}
\end{figure}

We define maps $s',t'\colon I\to I$ as follows:
\vspace{-20pt}
\begin{multicols}{2}
	\begin{equation*}
	s'(y):=\begin{cases}
		y, & y\in[0,\frac{11}{18}]\\
		1-\frac{7}{18}f_M^2(y), & y\in[\frac{11}{18},1],
		\end{cases}
	\end{equation*}\break
	\begin{equation*}
	t'(y):=\begin{cases}
		f^2_M(y), & y\in[0,\frac{11}{18}],\\
		\frac{18}{7}(1-y), & y\in[\frac{11}{18},1],
		\end{cases}
	\end{equation*}
\end{multicols}
see Figure~\ref{fig:s'andt'}. Since $f_M^2(11/18)=1$, $s'$ and $t'$ are well-defined and continuous. Note also that for $y\in[0,11/18]$ we have $t'(s'(y))=t'(y)=f_M^2(y)$, and for $y\in[11/18,1]$, also $s'(y)\in[11/18,1]$, and thus $t'(s'(y))=t'(1-7/18(f_M^2(y)))=f_M^2(y)$. It follows that $t'\circ s'=f_M^2$.

\begin{figure}[h!]
	\centering
	\begin{tikzpicture}[xscale=5,yscale=5]
	\draw (0,0)--(0,1)--(1,1)--(1,0)--(0,0);
	\draw[dashed] (11/18,0)--(11/18,1);
	\draw[dashed] (0,11/18)--(1,11/18);
	\node[below] at (11/18,0) {\small $\frac{11}{18}$};
	\node[left] at (0,11/18) {\small $\frac{11}{18}$};
	\draw[dashed] (7/9,0)--(7/9,1);
	\draw[dashed] (0,47/54)--(1,47/54);
	\node[below] at (7/9,0) {\small $\frac{7}{9}$};
	\node[left] at (0,47/54) {\small $\frac{47}{54}$};
	\draw[dashed] (8/9,0)--(8/9,1);
	\draw[dashed] (0,40/54)--(1,40/54);
	\node[below] at (8/9,0) {\small $\frac{8}{9}$};
	\node[left] at (0,40/54) {\small $\frac{40}{54}$};
	\draw[dashed] (2/3,0)--(2/3,1);
	\node[below] at (2/3,0) {\small $\frac{2}{3}$};
	
	\node[below] at (7/18,-0.175) {};
	\node[above] at (11/18,1.1) {};

	\draw ( 0 , 0 ) --
	( 0.6111111111111112 , 0.6111111111111112 ) --
	( 0.6666666666666666 , 1 ) --
	( 0.7777777777777778 , 0.6111111111111112 ) --
	( 0.8148148148148148 , 0.8703703703703703 ) --
	( 0.8518518518518519 , 0.7407407407407407 ) --
	( 0.8888888888888888 , 1 ) --
	( 1 , 0.6111111111111112 );
	\end{tikzpicture}
	\begin{tikzpicture}[xscale=5,yscale=5]
	\draw (0,0)--(0,1)--(1,1)--(1,0)--(0,0);
	\draw[dashed] (1/3,0)--(1/3,1);
	\draw[dashed] (2/3,0)--(2/3,1);
	\draw[dashed] (0,1/3)--(1,1/3);
	\draw[dashed] (0,2/3)--(1,2/3);
	\node[below] at (1/3,0) {\small $\frac 13$};
	\node[below] at (2/3,0) {\small $\frac 23$};
	\node[left] at (0,1/3) {\small $\frac 13$};
	\node[left] at (0,2/3) {\small $\frac 23$};
	\draw[dashed,thin] (4/9,0)--(4/9,1);
	\node[below] at (4/9,0) {\small $\frac 49$};
	\draw[dashed,thin] (5/9,0)--(5/9,1);
	\node[below] at (5/9,0) {\small $\frac 59$};
	
	\draw[dashed] (11/18,0)--(11/18,1);

	\node[below] at (7/18,-0.175) {\small $ $};
	%\draw[->] (7/18,-0.11)--(7/18,-0.01);
	\node[above] at (11/18,1.1) {\small $\frac{11}{18}$};
	\draw[->] (11/18,1.11)--(11/18,1.01);
	
	\draw ( 0 , 0.0 ) --
	( 0.1111111111111111 , 1.0 ) --
	( 0.14814814814814814 , 0.33333333333333326 ) --
	( 0.18518518518518517 , 0.6666666666666666 ) --
	( 0.2222222222222222 , 0.0 ) --
	( 0.3333333333333333 , 1.0 ) --
	( 0.3888888888888889 , 6.661338147750939e-16 ) --
	( 0.4074074074074074 , 0.6666666666666666 ) --
	( 0.42592592592592593 , 0.3333333333333346 ) --
	( 0.4444444444444444 , 0.9999999999999998 ) --
	( 0.48148148148148145 , 0.3333333333333336 ) --
	( 0.5185185185185185 , 0.666666666666666 ) --
	( 0.5555555555555556 , 0.0 ) --
	( 0.5740740740740741 , 0.6666666666666666 ) --
	( 0.5925925925925926 , 0.333333333333334 ) --
	( 0.6111111111111112 , 0.9999999999999989 ) --
	( 1 , 0 );
	\end{tikzpicture}
	\vspace{-20pt}
	\caption{Graphs of maps $s'$ and $t'$. Note that $t'\circ s'=f_M^2$.}
	\label{fig:s'andt'}
\end{figure}

Now let $x=(x_0,x_1,x_2,x_3,\ldots)\in X_M$. We will construct a planar embedding $\nu_x\colon X\to\R^2$ such that $\nu_x(x)$ is accessible.

Note that $(x_0,x_2,x_4,\ldots)\in\underleftarrow{\lim}(I,f_M^2)$. For every $i\in\N$ we define maps $s_i,t_i\colon I\to I$ as one of $s,s',t,t'$, depending on the position of the $(2i)$th coordinate of $x$ as follows: 
\begin{multicols}{2}
	\begin{equation*}
	s_i=\begin{cases}
	s', & x_{2i}\in[0,\frac{7}{18}],\\
	s, & x_{2i}\in (\frac{7}{18},1].
	\end{cases}
	\end{equation*}\break
	\begin{equation*}
	t_i=\begin{cases}
	t', & x_{2i}\in[0,\frac{7}{18}],\\
	t, & x_{2i}\in (\frac{7}{18},1].
	\end{cases}
	\end{equation*}
\end{multicols}

Note that $t_i\circ s_i=f_M^2$, $s_i(x_{2i})=x_{2i}$, and $x_{2i}$ is not in a zigzag of $s_i$ for every $i\in\N$. Furthermore, for $i\in\N$ we define $g_i:=s_i\circ t_{i+1}$, see the commutative diagram in Figure~\ref{fig:comm2}.

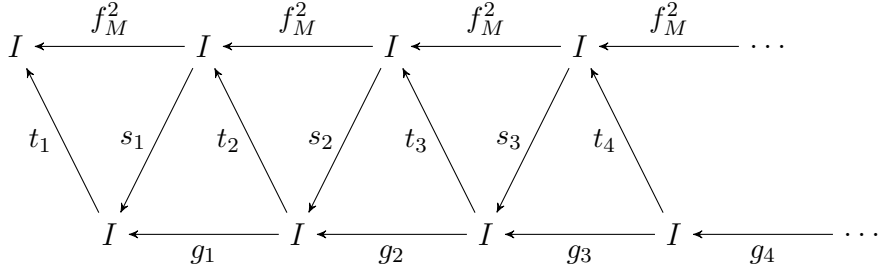
\begin{figure}[!ht]
	\centering
	\begin{tikzpicture}[->,>=stealth',auto, scale=2.5]
	\node (1) at (0,1) {$I$};
	\node (2) at (0.5,0) {$I$};
	\node (3) at (1,1) {$I$};
	\node (4) at (1.5,0) {$I$};
	\node (5) at (2,1) {$I$};
	\node (6) at (2.5,0) {$I$};
	\node (7) at (3,1) {$I$};
	\node (8) at (3.5,0) {$I$};
	\node (9) at (4,1) {$\ldots$};
	\node (10) at (4.5,0) {$\ldots$};
	\draw [->] (3) -- node[above]{\small $f_M^2$} (1);
	\draw [->] (5) -- node[above]{\small $f_M^2$} (3);
	\draw [->] (7) -- node[above]{\small $f_M^2$} (5);
	\draw [->] (9) -- node[above]{\small $f_M^2$} (7);
	\draw [->] (4) -- node[below]{\small $g_1$} (2);
	\draw [->] (6) -- node[below]{\small $g_2$} (4);
	\draw [->] (8) -- node[below]{\small $g_3$} (6);
	\draw [->] (10) -- node[below]{\small $g_4$} (8);
	\draw [->] (2) -- node[left]{\small $t_1$} (1);
	\draw [->] (3) -- node[left]{\small $s_1$} (2);
	\draw [->] (4) -- node[left]{\small $t_2$} (3);
	\draw [->] (5) -- node[left]{\small $s_2$} (4);
	\draw [->] (6) -- node[left]{\small $t_3$} (5);
	\draw [->] (7) -- node[left]{\small $s_3$} (6);
	\draw [->] (8) -- node[left]{\small $t_4$} (7);
	\end{tikzpicture}
	\caption{Commutative diagram. Minc's continuum $X_M$ is homeomorphic to $\protect \underleftarrow{\lim}(I,g_i)$, and coordinates of $x$ will not be in zigzags of bonding maps $g_i$.}
	\label{fig:comm2}
\end{figure}

\begin{lemma}\label{lem:mincz}
	If $y$ is in a zigzag of $g_i=s_i\circ t_{i+1}$, then $t_{i+1}(y)$ is in a zigzag of $s_i$.
\end{lemma}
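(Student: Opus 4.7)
My plan is to apply Proposition~\ref{prop:comp} with $f=t_{i+1}$ and $g=s_i$: whenever $y$ is in a zigzag of $g_i=s_i\circ t_{i+1}$, either $y$ is in a zigzag of $t_{i+1}$, or $t_{i+1}(y)$ is in a zigzag of $s_i$. The lemma will follow once I show that, under the hypothesis $y\in$ zigzag$(g_i)$, the first alternative cannot occur, and I do this by proving the contrapositive: if $y$ lies in a zigzag of $t_{i+1}$, then $y$ is not in a zigzag of $g_i$.

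I would first enumerate the zigzag intervals $(c_k,c_{k+1})$ of $t$ and $t'$ by direct inspection of Figures~\ref{fig:sandt} and~\ref{fig:s'andt'}, aided by Remark~\ref{rem:zigzag}. In every case the two endpoint critical values lie in $\{\tfrac13,\tfrac23\}$ (since the only critical values of $f_M^2$ are $\{0,\tfrac13,\tfrac23,1\}$, and endpoint values in $\{0,1\}$ are ruled out by the Remark applied to either endpoint, the strict-extremum hypothesis failing within $[0,1]$), and $t_{i+1}$ is strictly monotone on $[c_k,c_{k+1}]$ with image $[\tfrac13,\tfrac23]$. A direct calculation from the defining formulas gives $s(\tfrac13)=0$, $s(\tfrac23)=\tfrac23$, $s'(\tfrac13)=\tfrac13$, $s'(\tfrac23)=1$, so for either choice of $s_i$ exactly one of the values $g_i(c_k),g_i(c_{k+1})$ belongs to $\{0,1\}$.

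Next I would verify that $g_i$ is strictly monotone on $[c_k,c_{k+1}]$: on $[\tfrac13,\tfrac23]$ the map $s_i$ has no critical points, and its only corners (at $\tfrac{7}{18}$ when $s_i=s$ or at $\tfrac{11}{18}$ when $s_i=s'$) have strictly positive one-sided slopes on both sides, so $s_i$ is strictly increasing on $[\tfrac13,\tfrac23]$ and the composition with the strictly monotone $t_{i+1}$ is strictly monotone on $[c_k,c_{k+1}]$. With $a=c_k$ and $b=c_{k+1}$, Lemma~\ref{lem:speczz} now applies to $g_i$: strict monotonicity gives $g_i(t)\notin\{g_i(a),g_i(b)\}$ for $t\in(a,b)$ and makes both $g_i|_{[a,y]}$ and $g_i|_{[y,b]}$ one-to-one, while one of $g_i(a),g_i(b)$ lies in $\{0,1\}$. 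Hence $y$ is not in a zigzag of $g_i$, contradicting the hypothesis, and Proposition~\ref{prop:comp} forces $t_{i+1}(y)$ to be in a zigzag of $s_i$. The main obstacle I anticipate is the finite bookkeeping for the enumeration, together with the check that the corners of $s_i$ on $[\tfrac13,\tfrac23]$ have positive one-sided slopes (so do not create new critical points of $g_i$); these are the only steps using the specific form of $f_M$ rather than the abstract identity $t_i\circ s_i=f_M^2$.
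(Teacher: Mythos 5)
Your proposal is correct and follows essentially the same route as the paper: apply Proposition~\ref{prop:comp}, then rule out the alternative that $y$ is in a zigzag of $t_{i+1}$ by observing that the corresponding lap maps onto $[\tfrac13,\tfrac23]$, on which $s_i$ is monotone with one endpoint value in $\{0,1\}$, so that $g_i$ is monotone there with an extreme endpoint value. The only (inessential) difference is that you close the contradiction via Lemma~\ref{lem:speczz} where the paper invokes Remark~\ref{rem:zigzag}; both are valid.
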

\begin{proof}
	By Proposition~\ref{prop:comp}, $y$ is in a zigzag of $t_{i+1}$, or $t_{i+1}(y)$ is in a zigzag of $s_i$. Assume that $y$ is in a zigzag of $t_{i+1}$. Then Remark~\ref{rem:zigzag} implies that there are critical points $c_k,c_{k+1}$ of $t_{i+1}$ such that $c_k<y<c_{k+1}$, $t_{i+1}|_{[c_k,c_{k+1}]}$ is monotone, and $t_{i+1}([c_k,c_{k+1}])=[1/3,2/3]$. Furthermore, $s_i|_{[1/3,2/3]}$ is monotone, and $s_i([1/3,2/3])=[0,2/3]$ if $s_i=s$, or $s_i([1/3,2/3])=[1/3,1]$ if $s_i=s'$. In any case, $g_i|_{[c_k,c_{k+1}]}=s_i\circ t_{i+1}|_{[c_k,c_{k+1}]}$ is monotone, and at least one of $g_i(c_k),g_i(c_{k+1})$ is in $\{0,1\}$. By Remark~\ref{rem:zigzag}, $y$ is not in a zigzag of $g_i$, which is a contradiction. It follows that $t_{i+1}(y)$ must be in a zigzag of $s_i$.
\end{proof}

\begin{lemma}\label{lem:zznozz}
	For every $i\geq 0$ it holds that $g_i(x_{2(i+1)})=x_{2i}$ and $x_{2(i+1)}$ not in a zigzag of $g_i$.
\end{lemma}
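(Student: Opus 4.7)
The plan is to prove the two conclusions separately. For the first equality $g_i(x_{2(i+1)})=x_{2i}$, I would simply unfold $g_i=s_i\circ t_{i+1}$ and verify the two factors case-by-case. The map $t_{i+1}$ is defined piecewise so that it agrees with $f_M^2$ on whichever of the intervals $[0,11/18]$ or $[7/18,1]$ is selected by the position of $x_{2(i+1)}$. Since $x_{2(i+1)}$ lies in precisely that interval (this is how the choice was made), we get $t_{i+1}(x_{2(i+1)})=f_M^2(x_{2(i+1)})=x_{2i}$, where the second equality uses that $(x_0,x_2,x_4,\ldots)\in\underleftarrow{\lim}(I,f_M^2)$. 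An analogous case analysis shows $s_i$ acts as the identity on the half of $I$ containing $x_{2i}$, hence $s_i(x_{2i})=x_{2i}$. Composing gives the desired equality.

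For the zigzag claim, I would apply Lemma~\ref{lem:mincz} contrapositively: if $x_{2(i+1)}$ were in a zigzag of $g_i$, then $t_{i+1}(x_{2(i+1)})=x_{2i}$ would be in a zigzag of $s_i$, so it suffices to rule out the latter. The key design feature of $s$ and $s'$ is precisely that the selected $s_i$ equals the identity on the half containing $x_{2i}$: if $s_i=s'$, all critical points of $s'$ lie in $[11/18,1]$, so $x_{2i}\in[0,7/18]$ is in the interval $[0,c_1]$ relative to the critical set of $s'$; if $s_i=s$, all critical points of $s$ lie in $[0,7/18]$, so $x_{2i}\in(7/18,1]$ is in the interval $[c_m,1]$ relative to the critical set of $s$. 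The observation in Section~\ref{sec:main2} that points in $[0,c_1]$ or $[c_m,1]$ are never in a zigzag then closes the argument.

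There is no substantive obstacle here: the lemma is essentially a consequence of the careful choice of $s,t,s',t'$ combined with the selection rule defining $s_i,t_i$. The one piece of bookkeeping to keep straight is the asymmetric indexing, namely that $s_i$ is chosen from $x_{2i}$ while $t_{i+1}$ is chosen from $x_{2(i+1)}$, and that both choices are arranged so that $x_{2i}$ lands in the identity portion of $s_i$ (ensuring the absence of a zigzag there) while $x_{2(i+1)}$ lands in the portion of $t_{i+1}$ where it coincides with $f_M^2$ (ensuring the correct functional value). Beyond this, only Lemma~\ref{lem:mincz} and the elementary zigzag remark are needed.
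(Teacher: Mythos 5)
Your proof is correct and follows essentially the same route as the paper: the functional equation is verified by unwinding the selection rule for $s_i,t_{i+1}$ (the paper routes the computation through $s_{i+1}(x_{2(i+1)})=x_{2(i+1)}$ and $t_{i+1}\circ s_{i+1}=f_M^2$, which amounts to the same bookkeeping), and the zigzag claim is reduced via Lemma~\ref{lem:mincz} to the fact that $x_{2i}$ is not in a zigzag of $s_i$. Your extra observation that $x_{2i}$ lies in $[0,c_1]$ or $[c_m,1]$ relative to the critical set of $s_i$ is exactly the justification the paper leaves implicit in the remark preceding the lemma.
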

\begin{proof}
	We have $g_i(x_{2(i+1)})=g_i(s_{i+1}(x_{2(i+1)}))=s_i\circ t_{i+1}\circ s_{i+1}(x_{2(i+1)})=s_i\circ f_M^2(x_{2(i+1)})=s_i(x_{2i})=x_{2i}$. Furthermore, assume that $x_{2(i+1)}$ is in a zigzag of $g_i$. By Lemma~\ref{lem:mincz}, $t_{i+1}(x_{2(i+1)})=x_{2i}$ is in a zigzag of $s_i$. That is a contradiction.
\end{proof}

\begin{theorem}\label{thm:minc}
	For every $x\in X_M$ there exists a thin planar embedding $\nu_{x}\colon X_M\to \R^2$ such that $\nu_{x}(x)$ is accessible.
\end{theorem}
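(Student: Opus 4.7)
The plan is to apply Theorem~\ref{thm:representations} directly, with $n_i := 2$ for every $i \in \N$ and with the sequences $(s_i), (t_i)$ just constructed (each term chosen between $\{s, s'\}$ resp.\ $\{t, t'\}$ according to the position of $x_{2i}$). The maps $s, s', t, t'$ are surjective (visible from Figures~\ref{fig:sandt} and~\ref{fig:s'andt'}) and satisfy $t_i \circ s_i = f_M^2$ by construction, so setting $g_i := s_i \circ t_{i+1}$ makes the diagram in Figure~\ref{fig:comm2} commute and puts us in the set-up of Theorem~\ref{thm:representations}.

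The only remaining hypothesis to verify is that $s_i(x_{2i})$ is not in a zigzag of $g_{i-1}$ for every $i \geq 2$. Here I would use the defining feature of the four maps: $s$ is the identity on $[7/18, 1]$ and $s'$ is the identity on $[0, 11/18]$. Since the selection rule for $s_i$ always places $x_{2i}$ inside the identity region of the chosen map, we obtain $s_i(x_{2i}) = x_{2i}$. Lemma~\ref{lem:zznozz} applied at index $i-1$ then delivers exactly what is needed, namely that $x_{2i}$ is not in a zigzag of $g_{i-1}$. Theorem~\ref{thm:representations} then produces the desired thin embedding $\nu_x \colon X_M \to \R^2$ with $\nu_x(x)$ accessible.

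I do not expect a substantive obstacle at this stage: the real content has been absorbed into the explicit construction of $s, s', t, t'$ (designed so that one of them fixes $x_{2i}$ while still splitting $f_M^2$) and into Lemma~\ref{lem:mincz}, which is where Remark~\ref{rem:zigzag} is used to rule out a zigzag of $g_i$ arising from a zigzag of $t_{i+1}$. The only minor point to keep in mind is the boundary case $x_{2i} = 7/18$, where either choice of $s_i$ is permitted and both give $s_i(x_{2i}) = x_{2i}$, so no inconsistency arises.
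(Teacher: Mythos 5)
Your proposal is correct and follows essentially the same route as the paper, whose proof of this theorem is exactly the one-line application of Theorem~\ref{thm:representations} together with Lemma~\ref{lem:zznozz}; the supporting observations you make (surjectivity of $s,s',t,t'$, the identity $s_i(x_{2i})=x_{2i}$ forced by the selection rule) are the same ones the paper records in the text preceding Lemma~\ref{lem:mincz}.
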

\begin{proof}
	Proof follows directly from Theorem~\ref{thm:representations} and Lemma~\ref{lem:zznozz}.
\end{proof}

\begin{example}
	In particular, let $x=(1/2,1/2,1/2,\ldots)$. Then $s_i=s$, $t_i=t$, and $g_i=:g=s\circ t$ for every $i\in\N$. Thus there is a homeomorphism $h\colon X_M\to\underleftarrow{\lim}(I,g)$ such that $h(x)=(s(1/2),s(1/2),\ldots)=(1/2,1/2,\ldots)\in\underleftarrow{\lim}(I,g)$, and $1/2$ is not in a zigzag of $g$, see the graph of $g$ in Figure~\ref{fig:g}. We can then easily embed $\underleftarrow{\lim}(I,g)$ in the plane with $(1/2,1/2,\ldots)$ accessible.  
\end{example}

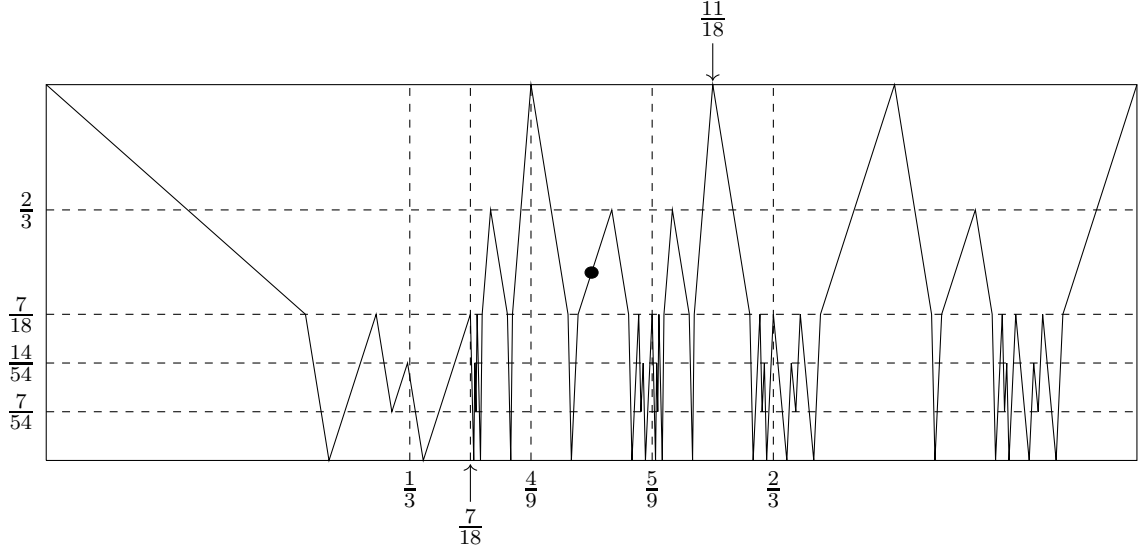
\begin{figure}
	\centering
	\hspace{-15pt}
	\begin{tikzpicture}[xscale=14.5,yscale=5]
	\draw (0,0)--(0,1)--(1,1)--(1,0)--(0,0);
	\draw[dashed] (1/3,0)--(1/3,1);
	\draw[dashed] (2/3,0)--(2/3,1);
	\draw[dashed] (0,7/18)--(1,7/18);
	\draw[dashed] (0,2/3)--(1,2/3);
	\node[below] at (1/3,0) {\small $\frac 13$};
	\node[below] at (2/3,0) {\small $\frac 23$};
	\node[left] at (0,7/18) {\small $\frac{7}{18}$};
	\node[left] at (0,2/3) {\small $\frac 23$};
	\draw[dashed,thin] (4/9,0)--(4/9,1);
	\node[below] at (4/9,0) {\small $\frac 49$};
	\draw[dashed,thin] (5/9,0)--(5/9,1);
	\node[below] at (5/9,0) {\small $\frac 59$};
	\draw[dashed] (7/18,0)--(7/18,1);
	
	\draw[fill,xscale=0.4] (1/2+0.75,1/2) circle (0.015);
	
	\node[below] at (7/18,-0.1) {\small $\frac{7}{18}$};
	\draw[->] (7/18,-0.11)--(7/18,-0.01);
	\node[above] at (11/18,1.1) {\small $\frac{11}{18}$};
	\draw[->] (11/18,1.11)--(11/18,1.01);
	
	\draw[dashed] (0,7/54)--(1,7/54);
	\node[left] at (0,7/54) {\small $\frac{7}{54}$};
	\draw[dashed] (0,14/54)--(1,14/54);
	\node[left] at (0,14/54) {\small $\frac{14}{54}$};
	
	\draw ( 0 , 1.0 ) --
	( 0.23765432098765452 , 0.38888888888888773 ) --
	( 0.25925925925925947 , 1.6653345369377348e-16 ) --
	( 0.30246913580246937 , 0.3888888888888882 ) --
	( 0.3168724279835394 , 0.1296296296296302 ) --
	( 0.3312757201646093 , 0.25925925925925847 ) --
	( 0.34567901234567927 , 1.6653345369377348e-16 ) --
	( 0.3888888888888889 , 0.38888888888888695 ) --
	( 0.3919753086419753 , 2.6645352591003757e-15 ) --
	( 0.39300411522633744 , 0.25925925925925897 ) --
	( 0.3940329218106996 , 0.12962962962963623 ) --
	( 0.3950617283950617 , 0.3888888888888876 ) --
	( 0.39814814814814814 , 2.7200464103316323e-15 ) --
	( 0.39969135802469136 , 0.3888888888888895 ) --
	( 0.4074074074074074 , 0.6666666666666667 ) --
	( 0.42283950617283955 , 0.3888888888888891 ) --
	( 0.42592592592592593 , 8.937295348232507e-15 ) --
	( 0.4274691358024691 , 0.388888888888885 ) --
	( 0.4444444444444444 , 0.9999999999999998 ) --
	( 0.4783950617283951 , 0.3888888888888846 ) --
	( 0.48148148148148145 , 1.942890293094023e-15 ) --
	( 0.48765432098765427 , 0.38888888888888773 ) --
	( 0.5185185185185185 , 0.666666666666666 ) --
	( 0.5339506172839505 , 0.38888888888888967 ) --
	( 0.537037037037037 , 4.662936703425656e-15 ) --
	( 0.5432098765432098 , 0.3888888888888871 ) --
	( 0.5452674897119342 , 0.1296296296296306 ) --
	( 0.5473251028806584 , 0.25925925925925575 ) --
	( 0.5493827160493827 , 1.942890293094024e-15 ) --
	( 0.5555555555555556 , 0.3888888888888889 ) --
	( 0.558641975308642 , 4.760081218080359e-15 ) --
	( 0.5596707818930041 , 0.25925925925925614 ) --
	( 0.5606995884773662 , 0.12962962962963498 ) --
	( 0.5617283950617284 , 0.3888888888888841 ) --
	( 0.5648148148148149 , 1.3988810110276968e-14 ) --
	( 0.566358024691358 , 0.38888888888887724 ) --
	( 0.5740740740740741 , 0.6666666666666666 ) --
	( 0.5895061728395061 , 0.3888888888888898 ) --
	( 0.5925925925925926 , 5.051514762044461e-15 ) --
	( 0.5941358024691358 , 0.3888888888888894 ) --
	( 0.6111111111111112 , 0.9999999999999991 ) --
	( 0.6450617283950617 , 0.3888888888888885 ) --
	( 0.6481481481481481 , 9.43689570931383e-16 ) --
	( 0.654320987654321 , 0.38888888888888273 ) --
	( 0.6563786008230452 , 0.12962962962963157 ) --
	( 0.6584362139917695 , 0.25925925925925536 ) --
	( 0.6604938271604938 , 4.274358644806853e-15 ) --
	( 0.6666666666666666 , 0.3888888888888889 ) --
	( 0.6790123456790124 , 2.6229018956769323e-15 ) --
	( 0.6831275720164609 , 0.25925925925925863 ) --
	( 0.6872427983539094 , 0.12962962962962982 ) --
	( 0.691358024691358 , 0.3888888888888872 ) --
	( 0.7037037037037037 , 7.771561172376093e-16 ) --
	( 0.7098765432098766 , 0.3888888888888892 ) --
	( 0.7777777777777778 , 0.9999999999999991 ) --
	( 0.8117283950617283 , 0.3888888888888894 ) --
	( 0.8148148148148148 , 1.942890293094023e-15 ) --
	( 0.8209876543209876 , 0.3888888888888893 ) --
	( 0.8518518518518519 , 0.6666666666666666 ) --
	( 0.8672839506172839 , 0.38888888888888945 ) --
	( 0.8703703703703703 , 1.942890293094023e-15 ) --
	( 0.8765432098765432 , 0.38888888888888695 ) --
	( 0.8786008230452674 , 0.12962962962963584 ) --
	( 0.8806584362139918 , 0.25925925925925924 ) --
	( 0.8827160493827161 , 2.886579864025407e-15 ) --
	( 0.8888888888888888 , 0.38888888888888695 ) --
	( 0.9012345679012345 , 2.914335439641036e-16 ) --
	( 0.905349794238683 , 0.2592592592592581 ) --
	( 0.9094650205761315 , 0.12962962962963098 ) --
	( 0.9135802469135802 , 0.38888888888888745 ) --
	( 0.9259259259259258 , 7.771561172376096e-16 ) --
	( 0.9320987654320987 , 0.3888888888888885 ) --
	( 1 , 0.9999999999999998 );
	\end{tikzpicture}
	\vspace{-10pt}
	\caption{Graph of $g=s\circ t$, with $x$ axis expanded for clarity. Note that $1/2$ is not in a zigzag of $g$.}
	\label{fig:g}
\end{figure}

\section{Post-critically finite locally eventually onto bonding map $f$}\label{sec:general}

We start this section with a theorem which generalizes previous ideas to a much larger class of chainable continua. We will then show that if $f$ is piecewise monotone and post-critically finite locally eventually onto map, then for an arbitrary $x\in\underleftarrow{\lim}(I,f)=:X_f$ there is a planar embedding $\nu_x\colon X_f\to \R^2$ such that $\nu_x(x)$ is an accessible point of $\nu_x(X_f)$.
Recall that $f$ is {\em piecewise monotone} if there is finitely many points $0=c_0<c_1<\ldots<c_m<c_{m+1}=1$ such that $f$ is strictly monotone on $[c_i,c_{i+1}]$ for every $i\in\{0,\ldots,m\}$. The set $C=\{c_0,c_1,\ldots,c_m,c_{m+1}\}$ will be referred to as {\em critical set}.

\begin{definition}
	Let $f\colon I\to I$ be a piecewise monotone map, and $y\in I$ be a non-critical point of $f$. Then there is a maximal interval $J:=J(f,y)\subset I$ such that $y\in J$, and $f|_J$ is one-to-one. If $y$ is a critical point of $f$, then there are two maximal intervals $J_1,J_2\ni y$ such that $f|_{J_1}, f|_{J_2}$ are one-to-one. It holds that $J_1\cap J_2=\{y\}$, and exactly one of $f|_{J_1},f|_{J_2}$ is monotone increasing. We define $J(f,y)=J_1$ if $f(J_1)\subseteq f(J_2)$, or $J(f,y)=J_2$ otherwise. For $y\in I$  we denote $B(f,y):=f(J(f,y))$ and call it an {\em $f$-branch of $y$}. 
\end{definition}

\begin{theorem}\label{thm:main}
	Let $X=\underleftarrow{\lim}(I,f_i)$, where every $f_i$ is piecewise monotone onto map, and let $x=(x_0,x_1,x_2,\ldots)\in X$. Assume that the following conditions are satisfied:
	\begin{enumerate}
		\item there exist $a<b\in I$ such that $B(f_i,x_i)=[a,b]$ for every $i\in\N$,
		\item there is $\eps>0$ such that $[a,a+\eps)\cap\{x_i:i\geq 0\}=\emptyset$, or $\{x_i:i\in\N\}\cap(b-\eps,b]=\emptyset$,
		\item for every interval $J\subset I$ of diameter $\geq\eps/2$ it holds that $f_i(J)=I$ for every $i\in\N$.
	\end{enumerate}
	Then there exists a thin embedding $\nu_x\colon X\to\R^2$ such that $\nu_x(x)$ is an accessible point of $\nu_x(X)$.
\end{theorem}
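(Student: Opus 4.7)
The strategy parallels Section~\ref{sec:Minc}. By the symmetry in condition~(2), assume $[a,a+\eps)\cap\{x_i:i\geq 0\}=\emptyset$; the other alternative is analogous. Condition~(1) gives $x_{i-1}=f_i(x_i)\in B(f_i,x_i)=[a,b]$, so $\{x_i\}\subset[a+\eps,b]$. Writing $J(f_i,x_i)=[\alpha_i,\beta_i]$, condition~(3) forces $|\beta_i-\alpha_i|<\eps/2$ (otherwise $f_i([\alpha_i,\beta_i])=I\ne[a,b]$), so at least one of $[0,\alpha_i]$ or $[\beta_i,1]$ has length $>\eps/2$; for each $i$ we place a folding interval $K_i$ of length $\geq\eps/2$ on this ``safe side'' of $x_i$.

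For each $i\in\N$ we construct continuous surjections $s_i,t_i\colon I\to I$ with $t_i\circ s_i=f_i$ satisfying two key properties: (A) $s_i=\mathrm{id}$ on an open neighborhood of $x_i$ and every critical point of $s_i$ lies inside $K_i$; (B) $s_i$ restricted to $[a,b]$ is monotone with $s_i(a)\in\{0,1\}$ (or $s_i(b)\in\{0,1\}$, depending on the side). Concretely, in the left-side case, condition~(3) applied to an $\eps/2$-long interval just to the left of $\alpha_i$ produces $\delta_i<\alpha_i$ with $f_i(\delta_i)\in\{0,1\}$; set $K_i=[0,\delta_i]$, $s_i=\mathrm{id}$, $t_i=f_i$ on $I\setminus K_i$, and on $K_i$ take $t_i|_{K_i}\colon K_i\to I$ piecewise linear with all critical values in $\{0,1\}$ and $t_i(\delta_i)=f_i(\delta_i)$; then $s_i|_{K_i}$ is defined by $s_i=(t_i|_{K_i})^{-1}\circ f_i$ with a consistent branch selection ensuring surjectivity, using $f_i(K_i)=I$ (which follows from $|K_i|\geq\eps/2$ and~(3)). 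Property~(A) is immediate from $s_i=\mathrm{id}$ on $[\delta_i,1]\ni x_i$. Property~(B) is enforced by tuning the folding of $t_i|_{K_i}$ so that the identity piece of $s_i$ extends monotonically across $[a,\delta_i]$ inside $K_i$ to reach $s_i(a)\in\{0,1\}$, exactly as in Section~\ref{sec:Minc} where $s|_{[1/3,2/3]}$ is monotone increasing with $s(1/3)=0$. This combinatorial arrangement, which must bookkeep the exact structure of $f_i$ on $[a,\delta_i]$, is the technical heart of the argument and the main obstacle.

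Set $g_i:=s_i\circ t_{i+1}$; the diagram of Figure~\ref{fig:comm} (with $n_i\equiv 1$) commutes and yields a homeomorphism $h\colon X\to\underleftarrow{\lim}(I,g_i)$, $h((\xi_0,\xi_1,\ldots))=(s_1(\xi_1),s_2(\xi_2),\ldots)$, with $h(x)_{j-1}=s_j(x_j)=x_j$ by~(A). By Theorem~\ref{thm:zigzag} it suffices to show $x_j$ is not in a zigzag of $g_{j-1}$ for each $j\geq 2$. Suppose not; Proposition~\ref{prop:comp} gives either (a) $x_j$ is in a zigzag of $t_j$, or (b) $t_j(x_j)=x_{j-1}$ is in a zigzag of $s_{j-1}$. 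Case~(b) fails by~(A) for $s_{j-1}$ and Definition~\ref{def:zigzag}, since $x_{j-1}$ lies to one side of every critical point of $s_{j-1}$. In case~(a), the adjacent critical points of $t_j$ flanking $x_j$ are $\alpha_j$ and $\beta_j$ (the open branch contains no critical points of $f_j$), with values $a$ and $b$. If $\{a,b\}\cap\{0,1\}\ne\emptyset$, Remark~\ref{rem:zigzag} applied to $t_j$ already forbids a zigzag at $x_j$, contradiction. Otherwise $t_j([\alpha_j,\beta_j])=[a,b]$ and~(B) makes $g_{j-1}|_{[\alpha_j,\beta_j]}=s_{j-1}\circ t_j|_{[\alpha_j,\beta_j]}$ monotone with a value in $\{0,1\}$ at one of $\alpha_j,\beta_j$; Remark~\ref{rem:zigzag} applied to $g_{j-1}$ now forbids the zigzag at $x_j$, contradiction. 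Theorem~\ref{thm:zigzag} then produces a thin embedding $\mu\colon\underleftarrow{\lim}(I,g_i)\to\R^2$ with $\mu(h(x))$ accessible, and $\nu_x:=\mu\circ h$ is the required embedding of $X$.
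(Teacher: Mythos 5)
Your high-level strategy is the paper's: factor $f_i=t_i\circ s_i$, pass to the shifted system $g_i=s_i\circ t_{i+1}$, show the coordinates of $h(x)$ avoid zigzags of the $g_i$, and invoke Theorems~\ref{thm:zigzag} and~\ref{thm:representations}. But the step you yourself flag as ``the technical heart and the main obstacle'' is where the proof actually lives, and the construction you sketch does not go through. Your properties (A) and (B) are jointly unsatisfiable in general. If $J(f_i,x_i)=[\alpha_i,\beta_i]$ has $\alpha_i\leq a$, then no admissible $\delta_i<\alpha_i$ puts $a$ inside $K_i=[0,\delta_i]$, so $s_i(a)=a\notin\{0,1\}$ and (B) fails outright. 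If $a<\delta_i$, then (B) makes $s_i|_{[a,\delta_i]}$ a homeomorphism onto $[0,\delta_i]$, so $t_i|_{[0,\delta_i]}=f_i\circ(s_i|_{[a,\delta_i]})^{-1}$ is forced to reproduce every critical value of $f_i|_{[a,\delta_i]}$ --- these are arbitrary points of $(0,1)$, contradicting your requirement that $t_i|_{K_i}$ have critical values in $\{0,1\}$; and the identity $t_i\circ s_i=f_i$ on $[0,a]$ then demands that $f_i|_{[0,a]}$ factor continuously through $f_i|_{[a,\delta_i]}$, which is false in general. A symptom of the misplacement is that your fold is located by the domain-position of $J(f_i,x_i)$ and your final argument never actually uses hypothesis~(2), even though (2) is exactly what the construction must exploit.

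The paper instead puts the fold inside the domain interval $[a,a+\eps)$: by (3) there are $\alpha_i<\beta_i$ in $[a,a+\eps)$ with $f_i(\alpha_i)=1$ and $f_i(\beta_i)=0$, and one sets $s_i(y)=\beta_i(1-f_i(y))$ on $[0,\beta_i]$ and $s_i=\mathrm{id}$ on $[\beta_i,1]$, with $t_i$ linear on $[0,\beta_i]$ and equal to $f_i$ on $[\beta_i,1]$. Hypothesis~(2) guarantees every $x_j\geq a+\eps>\beta_i$, so $s_i(x_i)=x_i$ and $x_{i-1}$ lies in the last branch of $s_{i-1}$. Crucially, $s_{i-1}$ is \emph{not} monotone on $[a,b]$ (it oscillates on $[a,\beta_{i-1}]$), so your appeal to Remark~\ref{rem:zigzag} via monotonicity of $g_{j-1}$ on $J(t_j,x_j)$ is unavailable; the paper instead applies Lemma~\ref{lem:speczz} to the subinterval $J'\subset J(f_i,x_i)$ that $t_i$ carries onto $[\alpha'_{i-1},b]$, where $\alpha'_{i-1}$ is the last zero of $s_{i-1}$ in $[\alpha_{i-1},\beta_{i-1})$: the composition takes the value $0$ at one end, stays in $(0,b)$ in between, and is injective from $x_i$ to the other end because $s_{i-1}|_{[x_{i-1},b]}=\mathrm{id}$. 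You never invoke Lemma~\ref{lem:speczz}, and without it (or a correct replacement for the unattainable (B)) the case where $x_i$ lies in a zigzag of $f_i$ remains unproved.
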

\begin{proof}
	For every $i\in\N$ we will define $s_i,t_i\colon I\to I$ such that $t_i\circ s_i=f_i$, and such that $s_i(x_i)$ is not in a zigzag of $s_{i-1}\circ t_i$. The reader is encouraged to recall the maps $s,t,s',t'$ from the previous section.
	
	{\bf Case 1.} Assume that $[a,a+\eps)\cap\{x_i: i\geq 0\}=\emptyset$ in $(2)$. By $(3)$, for every $i\in\N$ there is $[\alpha_i,\beta_i]\subset [a,a+\eps)$ such that $f_i(\alpha_i)=1$, $f_i(\beta_i)=0$. We define $s_i,t_i\colon I\to I$ as follows
	\vspace{-20pt}
	\begin{multicols}{2}
		\begin{equation*}
		s_i(y):=\begin{cases}
		\beta_i(1-f_i(y)), & y\in[0,\beta_i],\\
		y, & y\in[\beta_i,1].
		\end{cases}
		\end{equation*}\break
		\begin{equation*}
		t_i(y):=\begin{cases}
		1-\frac{1}{\beta_i}y, & y\in [0,\beta_i],\\
		f_i(y), & y\in[\beta_i,1],
		\end{cases}
		\end{equation*}
	\end{multicols}
	Since $f_i(\beta_i)=0$, it follows that $s_i,t_i$ are well-defined and continuous. Furthermore, if $y\in[0,\beta_i]$, then $\beta_i(1-f_i(y))\in[0,\beta_i]$, so $t_i(s_i(y))=1-\frac{1}{\beta_i}(\beta_i(1-f_i(y)))=f_i(y)$. Thus $t_i\circ s_i(y)=f_i(y)$ for every $x\in I$. Note that $s_i(x_i)=x_i$ and Remark~\ref{rem:zigzag} implies that $x_i$ is not in a zigzag of $f_i$, for every $i\in\N$.
	
	We claim that $x_i=s_i(x_i)$ is not in a zigzag of $s_{i-1}\circ t_i$. Note first that if $x_i$ is not in a zigzag of $f_i$, then it is also not in a zigzag of $t_i$. Since also $t_i(x_i)=x_{i-1}$ is not in a zigzag of $s_{i-1}$, Proposition~\ref{prop:comp} implies that $x_i$ is not in a zigzag of $s_{i-1}\circ t_i$. Assume that $x_i$ is in a zigzag of $f_i$. In particular, Remark~\ref{rem:zigzag} implies that $J(f_i,x_i)$ does not contain $\beta_i$, and thus $J(f_i,x_i)\subset(\beta_i,1]$. So $J(t_i,x_i)=J(f_i,x_i)$, and, by $(1)$, $t_i|_{J(f_i,x_i)}\colon J(f_i,x_i)\to [a,b]$ is one-to-one. Let $\alpha_{i-1}'\in[\alpha_{i-1},\beta_{i-1})$ be the largest such that $s_{i-1}(\alpha_{i-1}')=0$. Since $s_{i-1}(\alpha_{i-1})=0$, such $\alpha_{i-1}'$ exists. Moreover, $s_{i-1}((\alpha_{i-1}',b))=(0,b)$, and $s_{i-1}|_{[x_{i-1},b]}$ is one-to-one. Let $J'\subset J(f_i,x_i)$ be such that $t_i|_{J'}\colon J'\to [\alpha_{i-1}',b]$ is a homeomorphism. Then $x_i\in J'$ and Lemma~\ref{lem:speczz} implies that $x_i$ is not in a zigzag of $s_{i-1}\circ t_i$.
	
	Thus there is a homeomorphism $h\colon \underleftarrow{\lim}(I,f_i)\to\underleftarrow{\lim}(I,s_{i-1}\circ t_i)$ given by $(\xi_0,\xi_1,\xi_2,\ldots)\mapsto(s_1(\xi_1),s_2(\xi_2),\ldots)=(\xi_1,\xi_2,\ldots)$. In particular, $(x_1,x_2,\ldots)\in\underleftarrow{\lim}(I,s_{i-1}\circ t_i)$, and since $x_i$ is not contained in a zigzag of $s_{i-1}\circ t_i$ for every $i\geq 2$, Theorem~\ref{thm:zigzag} implies that there exists a thin planar embedding of $\underleftarrow{\lim}(I,s_{i-1}\circ t_i)$ in which $x$ is accessible. Theorem~\ref{thm:representations} finishes the proof in this case.
	
	{\bf Case 2.} Assume that $[a,a+\eps)\cap\{x_i:i\geq 0\}\neq\emptyset$. By $(2)$, $\{x_i: i\geq 0\}\cap (b-\eps,b]=\emptyset$. By $(3)$, for every $i\in\N$ there is $[\beta_i, \gamma_i]\subset(b-\eps,b]$ such that $f_i(\beta_i)=1$, and $f_i(\gamma_i)=0$. We define maps $s_i,t_i\colon I\to I$ as
	\begin{multicols}{2}
		\begin{equation*}
		s_i(y):=\begin{cases}
		y, & y\in[0,\beta_i],\\
		1-(1-\beta_i)f_i(y), & y\in[\beta_i,1].
		\end{cases}
		\end{equation*}\break
		\begin{equation*}
		t_i(y):=\begin{cases}
		f_i(y), & y\in [0,\beta_i],\\
		\frac{1}{1-\beta_i}(1-y), & y\in[\beta_i,1],
		\end{cases}
		\end{equation*}
	\end{multicols}
	Since $f_i(\beta_i)=1$, it follows that $s_i,t_i$ are continuous. Moreover, if $y\in[\beta_i,1]$, then $s_i(y)=1-(1-\beta_i)f_i(y)\in[\beta_i,1]$, so $t_i(s_i(y))=f_i(y)$. It follows that $t_i\circ s_i=f_i$. Moreover, $s_i(x_i)=x_i$, and Remark~\ref{rem:zigzag} implies that $x_i$ is not in a zigzag of $s_i$ for every $i\in\N$.
	
	We again claim that $x_i=s_i(x_i)$ is not in a zigzag of $s_{i-1}\circ t_i$ for every $i\geq 2$. If $x_i$ is not in a zigzag of $f_i$, then it is also not in a zigzag of $t_i$, so by Proposition~\ref{prop:comp}, it is not in a zigzag of $s_{i-1}\circ t_i$. If $x_i$ is in a zigzag of $f_i$, then Remark~\ref{rem:zigzag} implies that $J(f_i,x_i)\subset [0,\beta_i)$, and again $J(t_i,x_i)=J(f_i,x_i)$, and $t_i|_{J(t_i,x_i)}\colon J(t_i,x_i)\to [a,b]$ is a homeomorphism. Since $s_{i-1}(\gamma_{i-1})=1$, there is the smallest $\gamma'_{i-1}\in[\beta_{i-1},\gamma_{i-1}]$ such that $s_{i-1}(\gamma'_{i-1})=1$. We take $J'\subset J(t_i,x_i)$ such that $t_i|_{J'}\colon J'\to [a,\gamma'_{i-1}]$ is a homeomorphism. Since $x_i\in J'$, $s_{i-1}((a,\gamma'_{i-1}))=(a,1)$, and $s_{i-1}|_{[a,x_{i-1}]}$ is one-to-one, Lemma~\ref{lem:speczz} again implies that $x_i$ is not in a zigzag of $s_{i-1}\circ t_i$. The proof finishes the same as in Case 1.
\end{proof}

\begin{definition}
	An onto map $f\colon I\to I$ is called {\em locally eventually onto (leo)} if for every interval $J\subset I$ there is $n\in\N$ such that $f^n(J)=I$.
\end{definition}

\begin{lemma}
	If $f$ is leo, then for every $\eps>0$ there is $N\in\N$ such that for every interval $J\subset I$ with $\diam(J)\geq\eps$ it holds that $f^n(J)=I$ for every $n\geq N$.
\end{lemma}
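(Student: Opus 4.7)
The plan is to reduce the uniform statement to a finite cover plus a monotonicity observation. The key point is that since $f$ is onto, once an iterate covers $I$ it does so forever; concretely, if $f^n(J)=I$ then $f^{n+1}(J)=f(f^n(J))=f(I)=I$, and inductively $f^m(J)=I$ for every $m\ge n$. So the leo hypothesis really says that for every interval $J$ there is a threshold $n(J)$ beyond which $f^m(J)=I$ for all $m\ge n(J)$. It remains to make $n(J)$ uniform over all $J$ of diameter at least $\eps$.

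To do this, I would pick a finite partition $0=x_0<x_1<\ldots<x_K=1$ with $x_{k+1}-x_k\le\eps/2$ for each $k$. Set $J_k:=[x_k,x_{k+1}]$. The combinatorial observation is that any interval $J\subset I$ with $\diam(J)\ge\eps$ must contain some $J_k$ entirely: if $J=[a,b]$ with $b-a\ge\eps$, let $k$ be minimal with $x_k\ge a$; then $x_k<a+\eps/2$ and $x_{k+1}\le x_k+\eps/2<a+\eps\le b$, so $[x_k,x_{k+1}]\subset J$.

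By the leo property applied to each of the finitely many $J_k$, there are integers $n_k$ with $f^{n_k}(J_k)=I$. Put $N:=\max_{0\le k<K} n_k$. Then for every $n\ge N$ and every $k$, by the monotonicity observation above, $f^n(J_k)=I$. Finally, for an arbitrary interval $J$ with $\diam(J)\ge\eps$, choose $k$ with $J_k\subset J$; then $f^n(J)\supseteq f^n(J_k)=I$, hence $f^n(J)=I$, as required.

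There is essentially no obstacle here: the only substantive inputs are surjectivity of $f$ (for the monotonicity in $n$) and the pigeonhole-style fact that a sufficiently fine finite partition places a cell inside any interval of length $\ge\eps$. The argument is short and would occupy only a few lines when written out formally.
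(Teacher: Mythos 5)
Your proof is correct and follows essentially the same route as the paper: cover $I$ by finitely many intervals of diameter at most $\eps/2$, apply the leo property to each to get a uniform $N$, use surjectivity of $f$ to see that $f^n(J)=I$ persists for all larger $n$, and observe that any interval of diameter at least $\eps$ contains one of the cells. The only difference is that you spell out the pigeonhole step and the monotonicity in $n$ explicitly, which the paper leaves implicit.
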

\begin{proof}
	We find intervals $\{J_1,\ldots, J_k\}$ where $\diam J_i< \eps/2$, and $\cup_{i=1}^k J_i=I$. Since $f$ is leo, we can find $N\in\N$ such that $f^N(J_i)=I$ for every $i\in\{1,\ldots,k\}$. Then also $f^n(J_i)=I$ for every $n\geq N$ and $i\in\{1,\ldots,k\}$. Then we note that every interval $J\subset I$ such that $\diam(J)\geq\eps$ contains at least one $J_i$, so $f^n(J)=I$ for every $n\geq N$. 
\end{proof}

\begin{definition}
	Let $f$ be a piecewise monotone map with critical set $\{0=c_0<c_1<\ldots<c_m<c_{m+1}=1\}$. We say that $f$ is {\em post-critically finite} if every $c_i$ is eventually periodic, \ie for every $i\in\{0,\ldots,m+1\}$ there are $j(i)\in\N$, and $k(i)\geq 0$ such that $f^{j(i)+k(i)}(c_i)=f^{k(i)}(c_i)$. 
\end{definition} 

\begin{remark}\label{remark:branches}
	Assume that $f$ is piecewise monotone with critical set $\{0=c_0<c_1<\ldots<c_n<c_{m+1}=1\}$, and assume $f$ is post-critically finite. Then note that for every $n\in\N$ and $x\in I$, the endpoints of $B(f^n,x)$ belong to the set $\{f^k(c_i): i\in\{0,\ldots,m+1\}, k\in\N\}$, which is a finite set. Thus there is only finitely many types of branches in all iterates of $f$, \ie the set $\{B(f^n,x): n\in\N, x\in I\}$ is finite. 
\end{remark}

\begin{lemma}\label{lem:postcritfin}
	Let $X=\underleftarrow{\lim}(I,f)$, where $f$ is piecewise monotone leo map which is post-critically finite, and let $x=(x_0,x_1,x_2,\ldots)\in X$. Then there is a strictly increasing sequence $(n_i)_{i\geq 0}\subset\N$, there are $a<b\in I$, and $\eps>0$ such that
	\begin{enumerate}
		\item $B(f^{n_i-n_{i-1}},x_{n_i})=[a,b]$, for all $i\in\N$,
		\item $[a,a+\eps)\cap\{x_{n_i}:i\in\N\}=\emptyset$ or $\{x_{n_i}:i\in\N\}\cap(b-\eps,b]=\emptyset$,
		\item for every $i\in\N$, and every interval $J\subset I$ of diameter $\geq \eps/2$ it holds that $f^{n_i-n_{i-1}}(J)=I$. 
	\end{enumerate}
\end{lemma}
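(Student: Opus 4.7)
The plan is to combine a Ramsey-type argument, exploiting the finiteness of branches guaranteed by Remark~\ref{remark:branches}, with Bolzano--Weierstrass compactness and the preceding leo lemma, in three successive subsequence extractions.

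First, by Remark~\ref{remark:branches}, the set $S:=\{B(f^n,y):n\in\N,\,y\in I\}$ is finite. I color each pair $i<j$ in $\N_0$ by the element $B(f^{j-i},x_j)\in S$. The infinite Ramsey theorem then produces an infinite set $M\subset\N_0$ and a fixed interval $[a,b]\in S$ such that every pair $i<j$ in $M$ receives color $[a,b]$. Enumerating $M$ as $m_0<m_1<m_2<\cdots$, this immediately gives $B(f^{m_i-m_{i-1}},x_{m_i})=[a,b]$ for every $i\in\N$, which is condition~(1) for this sequence. Moreover, for each $i\geq 1$ we have $x_{m_{i-1}}=f^{m_i-m_{i-1}}(x_{m_i})\in B(f^{m_i-m_{i-1}},x_{m_i})=[a,b]$, so the whole subsequence $(x_{m_i})_{i\geq 0}$ lies in $[a,b]$.

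Second, by Bolzano--Weierstrass the sequence $(x_{m_i})\subset[a,b]$ has a convergent sub-subsequence $x_{m_{i_k}}\to x^*\in[a,b]$. Since $a<b$, at least one of $x^*\neq a$ or $x^*\neq b$ holds. In the first case I choose $\eps>0$ with $x^*\notin[a,a+2\eps)$ and discard finitely many initial terms so that the tail satisfies $\{x_{m_{i_k}}\}\cap[a,a+\eps)=\emptyset$; the second case is symmetric and yields avoidance of $(b-\eps,b]$. This produces condition~(2), while (1) is preserved because any subset of the Ramsey-monochromatic set $M$ inherits the property that \emph{all} of its pairs are colored $[a,b]$.

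Third, I apply the leo lemma to $\eps/2$ to obtain $N\in\N$ such that $f^n(J)=I$ whenever $J\subset I$ is an interval with $\diam J\geq\eps/2$ and $n\geq N$. I then thin the subsequence from Step~2 further so that every consecutive difference is at least $N$, which is always possible since the indices are unbounded. Relabelling the result as $(n_i)_{i\geq 0}$, all three conditions (1)--(3) now hold simultaneously.

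The main subtlety, rather than any single hard computation, is ordering the three refinements so that none destroys another: (1) is preserved by passing to any subset of $M$ precisely because the Ramsey coloring was defined on \emph{all} pairs (not only consecutive ones), while (2) and (3) are monotone with respect to further thinning. The essential use of the hypotheses enters in Step~1, where post-critical finiteness guarantees that $S$ is finite and hence that Ramsey's theorem applies; the leo hypothesis is used only in Step~3, via the preceding lemma.
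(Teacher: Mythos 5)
Your proof is correct, but it takes a genuinely different route from the paper's. The paper first establishes the nesting property $B(f^{j+1},x_{i+j+1})\subseteq B(f^j,x_{i+j})$, uses the finiteness of branches (Remark~\ref{remark:branches}) to conclude that each nested sequence stabilizes to some $A_i(x)$, selects indices where the stable branch is a fixed $[a,b]$, and spaces them out; it then needs an explicit extra verification that condition (1) survives the two later thinnings, via the auxiliary claim $B(f^{m'_i-m'_{i-1}+k},x_{m'_i+k})=[a,b]$ for all $k\geq 0$. You instead color each pair $i<j$ by the branch $B(f^{j-i},x_j)$, which is a legitimate finite coloring by the same Remark~\ref{remark:branches}, and invoke the infinite Ramsey theorem. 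This buys you two things: you bypass the nesting lemma and the stabilization bookkeeping entirely, and---since monochromaticity holds for \emph{all} pairs of $M$, not just consecutive ones---preservation of (1) under the Bolzano--Weierstrass and leo thinnings is automatic rather than something to be re-verified. The cost is appealing to Ramsey's theorem where the paper uses only monotone stabilization of a finite-valued nested sequence. The remaining steps (extracting a convergent subsequence in $[a,b]$ to secure (2), noting $x_{m_{i-1}}=f^{m_i-m_{i-1}}(x_{m_i})\in[a,b]$, and applying the leo lemma with $\eps/2$ to secure (3)) match the paper's. Two trivial points worth a sentence in a final write-up: the color $[a,b]$ is automatically nondegenerate since every branch $B(f^n,y)$ is the injective image of a nondegenerate interval, and you should drop $n_0=0$ if it occurs so that $(n_i)\subset\N$ as stated.
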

\begin{proof}
	Let $i\geq 0$, and $j\in\N$. We will first prove that $B(f^{j+1},x_{i+j+1})\subseteq B(f^j,x_{i+j})$. Let $J=J(f^{j+1},x_{i+j+1})$, so $x_{i+j+1}\in J$, and $J$ is maximal such that $f^{j+1}|_J$ is one-to-one. Then also $f|_J$ is one-to-one, and since $f(x_{i+j+1})=x_{i+j}$, it follows that $f(J)\ni x_{i+j}$, and $f^j|_{f(J)}$ is one-to-one. In particular, $B(f^j,x_{i+j})\supseteq f^{j+1}(J)=B(f^{j+1},x_{i+j+1})$.
	
	For every $i\geq 0$ we define $A_i(x)=\bigcap_{j\in\N}B(f^j,x_{i+j})$. Since $f$ is post-critically finite, Remark~\ref{remark:branches} implies that $\{A_i(x):i\geq 0\}$ is finite. The first part of the proof also implies that for every $i\geq 0$ there is $J(i)$ such that $B(f^j,x_{i+j})=A_i(x)$, for every $j\geq J(i)$. 
	
	Since $\{A_i:i\geq 0\}$ is finite, we can find a strictly increasing sequence $(m_i)_{i\geq 0}$ and $a<b\in I$ such that $A_{m_i}(x)=[a,b]$ for all $i\geq 0$. Now we define the strictly increasing sequence $(m'_i)_{i\geq 0}$ as $m'_0:=m_0$, and $m'_{i}:=\min\{m_j:  m_j-m'_{i-1}\geq J(m'_{i-1})\}$, for $i>0$. So, since $m'_i-m'_{i-1}\geq J(m'_{i-1})$, we have $B(f^{m'_i-m'_{i-1}},x_{m'_i})=B(f^{m'_i-m'_{i-1}},x_{m'_{i-1}+(m'_i-m'_{i-1})})=A_{m'_{i-1}}(x)=[a,b]$, for every $i\geq 1$.
	
	Note that actually $B(f^{m'_i-m'_{i-1}+k},x_{m'_i+k})=[a,b]$, for every $i\in\N$ and every $k\geq 0$. Furthermore, since $B(f^{m'_i-m'_{i-1}},x_{m'_i})=[a,b]$, and since $f^{m'_i-m'_{i-1}}(x_{m'_i})=x_{m'_{i-1}}$, it follows that $x_{m'_i}\in[a,b]$ for every $i\in\N$. In particular, there is a strictly increasing subsequence $(n'_i)_{i\geq 0}\subset (m'_i)_{i\geq 0}$ such that $(x_{n'_i})_{i\geq 0}$ converges to $y\in [a,b]$. Thus we can assume that there is $\eps>0$ such that $[a,a+\eps)\cap\{x_{n'_i}:i\geq 0\}=\emptyset$, or $\{x_{n'_i}:i\geq 0\}\cap (b-\eps,b]=\emptyset$. 
	
	Since $f$ is leo, there is $N\in\N$ such that for every interval $J\subset I$ of diameter $\geq \eps/2$ it holds that $f^n(J)=I$ for every $n\geq N$. We find a strictly increasing subsequence $(n_i)_{i\geq 0}\subset(n'_i)_{i\geq 0}$ such that $n_i-n_{i-1}>N$ for every $i\in\N$. Then it also holds that $[a,a+\eps)\cap\{x_{n_i}:i\geq 0\}=\emptyset$, or $\{x_{n_i}:i\geq 0\}\cap (b-\eps,b]=\emptyset$. Thus $(n_i)_{i\geq 0}$ satisfies $(2)$ and $(3)$.
	
	We only have to show that $(n_i)_{i\geq 0}$ satisfies $(1)$. Let $j> 0$. Then there are $i_1>i_2$ such that $n_j=m'_{i_1}$, and $n_{j-1}=m'_{i_2}$. Since $B(f^{m'_{i_2+1}-m'_{i_2}+k},x_{m'_{i_2+1}+k})=[a,b]$ for every $k\in\N$, by taking $k=m'_{i_1}-m'_{i_2+1}\geq 0$, we get $B(f^{m'_{i_1}-m'_{i_2}},x_{m'_{i_1}})=B(f^{n_j-n_{j-1}},x_{n_j})=[a,b]$, which finishes the proof.
\end{proof}

\begin{corollary}\label{cor:main}
	Let $f\colon I\to I$ be a piecewise monotone, post-critically finite, leo map, and let $x\in\underleftarrow{\lim}(I,f)=X_f$. Then there exists a thin planar embedding $\nu_x\colon X_f\to\R^2$ such that $\nu_x(x)$ is an accessible point of $\nu_x(X_f)$.
\end{corollary}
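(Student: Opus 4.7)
The plan is to apply Theorem \ref{thm:main} after re-representing $X_f$ as an inverse limit whose bonding maps and basepoint coordinates are tailored to $x$, using Lemma \ref{lem:postcritfin} to produce the required sequence of iterate-powers.

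First, given $x = (x_0, x_1, x_2, \ldots) \in X_f$, I would invoke Lemma \ref{lem:postcritfin} to obtain a strictly increasing sequence $(n_i)_{i \geq 0}$ together with $a < b \in I$ and $\eps > 0$ satisfying conclusions (1)--(3). Set $g_i := f^{n_i - n_{i-1}}$ for $i \in \N$; each $g_i$ is piecewise monotone and surjective (the latter because $f$ leo forces $f(I) = I$, hence $f^n(I) = I$ for every $n$). The subsequence map $h\colon X_f \to \underleftarrow{\lim}(I, g_i)$ defined by $(y_0, y_1, y_2, \ldots) \mapsto (y_{n_0}, y_{n_1}, y_{n_2}, \ldots)$ is well-defined because $g_i(y_{n_i}) = f^{n_i - n_{i-1}}(y_{n_i}) = y_{n_{i-1}}$, and it is a continuous bijection between compact Hausdorff spaces (its inverse reconstructs intermediate coordinates via $y_j := f^{n_i - j}(\xi_i)$ for $n_{i-1} \leq j \leq n_i$), so it is a homeomorphism. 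Under $h$, the point $x$ maps to $\tilde{x} := (x_{n_0}, x_{n_1}, x_{n_2}, \ldots)$.

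The main step is to verify that the inverse system $(I, g_i)$ together with $\tilde{x}$ satisfies the three hypotheses of Theorem \ref{thm:main} with the same data $a, b, \eps$: hypothesis (1) translates directly into Lemma \ref{lem:postcritfin}(1), hypothesis (2) into Lemma \ref{lem:postcritfin}(2), and hypothesis (3) into Lemma \ref{lem:postcritfin}(3). This hand-off is by design and requires no additional computation; the only care needed concerns the indexing conventions ($i \geq 0$ versus $i \in \N$) in hypothesis (2), which is handled by the observation (visible in the proof of Lemma \ref{lem:postcritfin}) that the lemma actually delivers the stronger $i \geq 0$ version of that conclusion.

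Theorem \ref{thm:main} then produces a thin embedding $\mu\colon \underleftarrow{\lim}(I, g_i) \to \R^2$ with $\mu(\tilde{x})$ accessible. Setting $\nu_x := \mu \circ h$ yields a thin planar embedding of $X_f$ with $\nu_x(x) = \mu(\tilde{x})$ accessible, since both thinness and accessibility are properties of the planar image together with the marked point and are preserved under the homeomorphic identification $h$. I do not anticipate any genuine obstacle: this corollary is essentially the concatenation of Lemma \ref{lem:postcritfin} and Theorem \ref{thm:main}, both of which were crafted to fit together in precisely this way.
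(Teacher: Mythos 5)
Your proposal is correct and follows essentially the same route as the paper: invoke Lemma~\ref{lem:postcritfin} to produce $(n_i)_{i\geq 0}$, $a<b$, and $\eps$, pass to the homeomorphic representation $\underleftarrow{\lim}(I,f^{n_i-n_{i-1}})$ via the subsequence homeomorphism $h$, apply Theorem~\ref{thm:main}, and compose. Your extra remarks on why $h$ is a homeomorphism and on the $i\geq 0$ versus $i\in\N$ indexing in hypothesis (2) are accurate but not needed beyond what the paper records.
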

\begin{proof}
	Let $(n_i)_{i\geq 0}$ be as in Lemma~\ref{lem:postcritfin}. Then Theorem~\ref{thm:main} implies that there is a thin planar embedding $\nu'_x\colon\underleftarrow{\lim}(I,f^{n_i-n_{i-1}})\to\R^2$ such that $\nu'_x((x_{n_0},x_{n_1},x_{n_2},\ldots))$ is an accessible point of $\nu'_x(\underleftarrow{\lim}(I,f^{n_i-n_{i-1}}))$. Denote by $h\colon X_f\to \underleftarrow{\lim}(I,f^{n_i-n_{i-1}})$ the homeomorphism given by $h((\xi_0,\xi_1,\xi_2,\ldots))=(\xi_{n_0},\xi_{n_1},\xi_{n_2},\ldots)$. Then $h(x)=(x_{n_0},x_{n_1},x_{n_2},\ldots)$, and $\nu_x:=\nu'_x\circ h\colon X_f\to\R^2$ is a thin planar embedding such that $\nu_x(x)$ is an accessible point of $\nu_x(X_f)$. 
\end{proof}

\end{document}